\documentclass[11pt]{article}
\usepackage{amsmath}
\usepackage{amsthm}
\usepackage{amssymb,amsfonts}
\usepackage[T1]{fontenc}
\usepackage{hyperref}
\usepackage{latexsym}
\usepackage{todonotes}
\usepackage{upgreek}
\usepackage{rotating}
\usepackage{nicefrac}
\usepackage{epsfig}
\usepackage{stmaryrd}
\usepackage{setspace}
\usepackage{enumerate}
\usepackage[all]{xypic}
\usepackage{bbm,ifpdf,tikz}
\ifpdf
\usepackage{pdfsync}
\fi

\oddsidemargin=0pt
\evensidemargin=0pt
\topmargin=0in
\headheight=0pt
\headsep=0pt
\setlength{\textheight}{9in}
\setlength{\textwidth}{6.5in}

\newtheorem{theorem}{Theorem}[section]

\newtheorem{lemma}[theorem]{Lemma}
\newtheorem{proposition}[theorem]{Proposition}

{
\theoremstyle{definition}
\newtheorem{definition}[theorem]{Definition}
\newtheorem{example}[theorem]{Example}
\newtheorem{fact}[theorem]{Fact}

\newtheorem{remark}[theorem]{Remark}
\newtheorem{thmab}{Theorem}

}

\newcommand{\excise}[1]{}

\newcommand{\Vect}{\operatorname{Vec}}

\renewcommand{\dim}{\operatorname{dim}}

\renewcommand{\and}{\qquad\text{and}\qquad}

\newcommand{\Z}{\mathbb{Z}}
\newcommand{\Q}{\mathbb{Q}}
\newcommand{\N}{\mathbb{N}}
\newcommand{\R}{\mathbb{R}}

\newcommand{\cPGg}{\mathcal{PG}_{\!g}}

\newcommand{\cone}{{\operatorname{cone}}}

\DeclareMathOperator{\FI}{FI}

\DeclareMathOperator{\UConf}{UConf}

\begin{document}
\spacing{1.2}
\noindent{\LARGE\bf Hilbert series in the category of trees with contractions}\\

\noindent{\bf Eric Ramos}\\
Department of Mathematics, University of Oregon,
Eugene, OR 97403\\

{\small
\begin{quote}
We consider Hilbert series associated to modules over various categories of trees. Using the technology of Sam and Snowden \cite{sam}, we show that these Hilbert series must be algebraic. We then apply these technical theorems to prove facts about certain natural generating functions associated to trees.
\end{quote} }

\section{Introduction}\label{sec:intro}

\subsection{The setup}

Let $\mathcal{C}$ denote an essentially small category. Then a representation of $\mathcal{C}$ is a fuctor from $\mathcal{C}$ to the category of $\Q$ vector spaces. In their seminal work \cite{sam}, Sam and Snowden established the study of representations combinatorial categories; categories such as $\FI$, of finite sets and injections.  Their framework got at the combinatorial heart of the concurrent development of representation stability, due to Church, Farb, and Ellenberg \cite{CEF}\cite{CF}, while also expanding it in a variety of directions.

The language of Sam and Snowden, very broadly speaking, is useful for proving facts about a category's representations in two related, but distinct, realms. The first of these is related to the presence, or lack thereof, of a \emph{Noetherian property}. Just as with module over rings, one can make sense of \emph{finite generation} when discussing representations of categories (see Definition \ref{fgdef}) The Noetherian property asserts that submodules of finitely generated modules are also finitely generated. This is the theoretical backbone of virtually all of representation stability theory, as it allows one to prove finite generation statements about representations appearing in the limits of spectral sequences. The second tool granted by Sam and Snowden's work is a means by which one can understand \emph{Hilbert series} of finitely generated representations of one's category. 

To explain what is meant by this, let $M:\mathcal{C} \rightarrow \Vect_{\Q}$ be a finitely generated $\mathcal{C}$ representation, and assume that you have a function $\nu$ from the isomorphism classes of objects of $\mathcal{C}$ (which is guaranteed to be a set by our essential smallness assumption) to $\mathbb{N}$. For example, in the case of $\FI$, one may take $\nu$ to be the function which maps each set to its cardinality. Such a function is called a \emph{norm} of the category. Then the \emph{Hilbert series} of $M$ with respect to $\nu$ is the formal power series
\[
H_{M,\nu}(t) := \sum_{x} \dim_\Q(M(x)) t^{\nu(x)},
\]
where the sum is over isomorphism classes of objects.

Work of Miyata, Proudfoot, and the author applied Sam and Snowden's theory to a variety of categories of graphs \cite{PR-trees}\cite{PR-genus}\cite{MPR}. In all cases, a Noetherian property was proved for representations of the categories being considered. This was then applied to prove non-trivial consequences about homology groups of graph configuration spaces, as well as Khazdan-Lusztig coefficients of graphical matroids \cite{EPW}. Missing from this prior work, however, is a treatment of the Hilbert series of these representations. The goal of the present work is to bridge this gap in the literature, primarily for the category of planar rooted trees with contractions.

\subsection{The main theorem}

In this work, a \emph{tree} is a nonempty, at most 1-dimensional, connected, and finite CW-complex that is contractible. A \emph{contraction} is a continuous map between trees that involves contracting one or more edges of the tree while also possibly permuting the vertices (see Definition \ref{contraction}). A \emph{planar rooted tree} is a tree with a designated vertex (the \emph{root}) along with total orderings on the sets of edges coming out (e.g. away from the root) of every vertex. There is also a notion of planar contractions (see Definition \ref{contraction}), which are contractions that preserve all of various structures of the planar tree. The category of planar rooted trees and planar contractions is denoted $\mathcal{PT}$. We consider representations of the opposite category $\mathcal{PT}^{op}$. These representations were the focal point of the precursor work \cite{PR-trees}.

Before we can discuss the Hilbert series of these representations, we first must decide on a norm. In this paper, we will be working with the norm $\nu(T) = |E_T|$, where $E_T$ is the edge set of $T$. Therefore, for a finitely generated $\mathcal{PT}^{op}$-module $M$, one would like to consider
\[
H_{M}(t) = \sum_{T} \dim_\Q(M(T)) t^{|E_T|}.
\]
Before we do this, however, we first take the time to decide upon a "nice" enumeration of the isormophism classes of objects in $\mathcal{PT}^{op}$.

Recall the formal language of \emph{Dyck paths}. That is, the language whose alphabet is the set $\{u,d\}$, made up of words of even length, such that each of the characters $u$ and $d$ occupy exactly half of the word, and up to any $i$, the sub word of letters up to index $i$ has no more $d$'s than $u$'s. It is a fact that Dyck paths, which happen to be counted by the famous Catalan numbers, are in bijection with planar rooted trees. Given a Dyck path $w$, we read the word from left to right, letter by letter. Each time a $u$ is read, we take a step upward in the left-most (thusfar untraveled) direction, while every time a $d$ is read we step downward. Therefore, The word $ud$ corresponds to a single edge, while the word $uududd$ corresponds to the planar rooted tree that looks like the letter Y. For a Dyck path $w$, we will write $T_p(w)$ for the planar tree uniquely associated to $w$. We therefore write
\[
H_{M}(t) = \sum_{w} \dim_\Q(M(T_p(w))) t^{l(w)/2},
\]
where $l(w)$ is the length of the path $w$.

\begin{thmab}\label{mainthm}
Let $M$ be a finitely generated $\mathcal{PT}^{op}$-module. Then the Hilbert series
\[
H_M(t) = \sum_{w} \dim_\Q(M(T(w))) t^{l(w)/2},
\]
is algebraic.
\end{thmab}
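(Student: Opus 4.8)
The plan is to run $M$ through the Gröbner-theoretic machinery of Sam and Snowden \cite{sam}, converting the computation of $H_M$ into a weighted count of words in a formal language, and then to invoke the Chomsky--Schützenberger theorem to deduce algebraicity. That algebraicity, rather than rationality as one sees for $\FI$, is the correct target is already visible in the simplest case: the constant module sends every tree to $\Q$, so its Hilbert series is $\sum_w t^{l(w)/2}$, the generating function enumerating Dyck paths by semilength, i.e. the Catalan series $\tfrac{1-\sqrt{1-4t}}{2t}$. Since Dyck paths form the prototypical context-free but non-regular language, one should expect exactly the class of algebraic power series rather than the rational ones.

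First I would reduce from an arbitrary finitely generated module to the principal projectives. Choosing generators yields a surjection $F = \bigoplus_i P_{T_i} \twoheadrightarrow M$ from a finite sum of principal projectives, with kernel $N$. Because $\cPT^{\op}$ is Noetherian by \cite{PR-trees}, $N$ is again finitely generated, and $0 \to N \to F \to M \to 0$ gives $H_M = H_F - H_N$ objectwise. Iterating this naively need never terminate, since free resolutions may be infinite, so instead I would equip $\cPT^{\op}$ with a Gröbner-category structure in the sense of \cite{sam}, refining the Noetherianity argument of \cite{PR-trees}, and pass to the initial submodule: then $H_M = H_{F/\operatorname{in}(N)}$, and $F/\operatorname{in}(N)$ has a basis indexed by the \emph{standard monomials}, namely the morphisms out of the generating objects that are not divisible by a leading term of a Gröbner basis of $N$. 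This computes the answer in one step and recasts the problem as enumerating an explicit set of morphisms, weighted by the edge count of their source.

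The heart of the argument is then language-theoretic. I would encode morphisms in $\cPT$ --- planar contractions onto a fixed target tree --- as decorated Dyck words over a finite alphabet extending $\{u,d\}$, arranging the encoding so that the norm $\nu(T') = |E_{T'}|$ of the source corresponds to half the word length. A planar contraction onto the target amounts to blowing up each vertex of the target into a planar rooted subtree and grafting compatibly with the planar order; this substitution structure is precisely what a context-free grammar expresses, so the language of all such morphisms is context-free and its generating function, the Hilbert series of the principal projective, is algebraic. The leading-term conditions defining the standard monomials cut out context-free sublanguages, so the standard monomials form an unambiguous context-free language. Chomsky--Schützenberger then yields an algebraic generating function, and as the class of algebraic power series is closed under the finite sums and differences produced above, $H_M$ is algebraic.

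The main obstacle I anticipate is making the context-freeness claims rigorous while keeping the norm matched to word length. Two points need care: first, the alphabet and grammar must be arranged so that the leading-term ideals, which are defined through the combinatorial order on morphisms used in the Gröbner theory, are genuinely context-free rather than merely recursively enumerable, and so that the decomposition into standard monomials is unambiguous; second, one must check that the passage to isomorphism classes introduces no overcounting. The latter is where the \emph{rigidity} of planar rooted trees is decisive: the root together with the orderings at each vertex rigidifies the whole tree, so such trees admit no nontrivial automorphisms, and hence $\dim P_{T_i}(T') = |\Hom_{\cPT}(T',T_i)|$ is counted correctly word-by-word.
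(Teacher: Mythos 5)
Your proposal is correct and follows essentially the same route as the paper: both encode the planar contractions out of a fixed tree as decorated Dyck words so that the edge norm matches (half) the word length, show that these morphism sets and the relevant order ideals of leading terms form unambiguous context-free languages, and then feed this into the Sam--Snowden lingual-category machinery (whose internals --- Gr\"obner bases, initial submodules, standard monomials --- you unpack explicitly where the paper cites them as a black box). The only cosmetic difference is that you argue context-freeness via a substitution grammar while the paper constructs explicit push-down automata, which are equivalent formalisms.
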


The proof philosophy we apply for Theorem \ref{mainthm} follows the lingual category approach of Sam and Snowden \cite{sam}. In particular, we show that the category of planar rooted trees and contractions is unambiguous and context-free. In the final section of this work, we apply Theorem \ref{mainthm} to prove certain natural generating functions associated to Dyck paths are algebraic. These applications are novel (to the best knowledge of the author), and should be of some independent interest.

Because planar trees are just trees with extra structure, to every Dyck path $w$ we can associate a tree (not planar or rooted), which we call $T(w)$. Note that this association does not uniquely recover the Dyck path, but every tree arises in this way. Writing $\mathcal{T}$ for the category of trees and contractions, and given a finitely generated $\mathcal{T}^{op}$-module $M$, we define its \emph{Hilbert-Dyck} series as the formal power series
\[
HD_M(t) := \sum_{w} \dim_\Q(M(T(w))) t^{l(w)/2},
\]
where the sum is over all Dyck paths, and $l(w)$ is the length of the path. By how the association $w \mapsto T(w)$ is defined, we observe that $l(w)/2 = |E_T|$. Moreover, because this association is not a bijection, the Hilbert-Dyck series is \textbf{not} equal to the usual Hilbert series of modules over this category. Indeed, one may write
\[
HD_M(t) = \sum_{T} p_{T} \dim_\Q(M(T))t^{|E_T|},
\]
where $p_T$ is the total number of Dyck paths which correspond to the tree $T$. Unlike the aforementioned Hilbert series of $M$, the Hilbert-Dyck series will prove to be much more tractable. For instance, assuming $M$ is the module which assigns $\Q$ to every tree, one has
\[
HD_M(t) = \sum_{T} p_{T} t^{|E_T|} = \sum_{n} c_n t^n
\]
where $c_n$ is the $n$-th Catalan Number. This generating function is far better understood than the generating function for the number of isomorphism classes of trees. For instance, it is a celebrated fact that this generating function is algebraic. Our second technical result is that this is the case in general.

\begin{thmab}\label{mainthm2}
Let $M$ be a finitely generated $\mathcal{T}^{op}$-module. Then the Hilbert-Dyck series
\[
HD_M(t) = \sum_{w} \dim_\Q(M(T(w))) t^{l(w)/2},
\]
is algebraic.
\end{thmab}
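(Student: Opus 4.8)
The plan is to realize the Hilbert-Dyck series of $M$ as a genuine Hilbert series of an associated $\mathcal{PT}^{\op}$-module, and then invoke Theorem \ref{mainthm}. The bridge between the two categories is the forgetful functor $\Phi \colon \mathcal{PT} \to \mathcal{T}$ that sends a planar rooted tree to its underlying tree and a planar contraction to the underlying contraction. This functor is the identity on edge sets, is essentially surjective (every tree admits a planar rooted structure), and by construction satisfies $\Phi(T_p(w)) = T(w)$ for every Dyck path $w$. Restricting along $\Phi^{\op}$ yields a pullback functor $\Phi^* \colon \mathcal{T}^{\op}\text{-mod} \to \mathcal{PT}^{\op}\text{-mod}$, given by $\Phi^* M = M \circ \Phi^{\op}$.

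First I would record the key bookkeeping identity. For any $\mathcal{T}^{\op}$-module $M$ one has $(\Phi^* M)(T_p(w)) = M(\Phi(T_p(w))) = M(T(w))$, and since $\Phi$ preserves edge sets the norm $|E_{T_p(w)}| = l(w)/2$ is unchanged. Summing over all planar rooted trees, equivalently over all Dyck paths $w$, therefore gives
\[ H_{\Phi^* M}(t) \;=\; \sum_{w} \dim_\Q\bigl(M(T(w))\bigr)\, t^{l(w)/2} \;=\; HD_M(t). \]
In other words, the Hilbert-Dyck series is literally the ordinary Hilbert series of the pulled-back module, with the multiplicities $p_T$ appearing automatically because $\Phi$ is many-to-one on objects: each tree $T$ is hit by exactly its $p_T$ planar rooted lifts.

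The crux is then to show that $\Phi^*$ preserves finite generation, so that Theorem \ref{mainthm} applies to $\Phi^* M$. Since $\Phi^*$ is exact and commutes with finite direct sums, it suffices to check that $\Phi^*$ carries each principal projective $P_T^{\op}$ of $\mathcal{T}^{\op}$ to a finitely generated $\mathcal{PT}^{\op}$-module. I would prove this by exhibiting generators: for each of the finitely many planar rooted lifts $T_1, \dots, T_{p_T}$ of $T$ and each $\sigma \in \Aut_{\mathcal{T}}(T)$, take the basis vector $\sigma \in \Q[\Hom_{\mathcal{T}}(T,T)] = (\Phi^* P_T^{\op})(T_l)$ (note that any $\mathcal{T}$-morphism $T \to T$ contracts no edges, hence is an automorphism). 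To see these span, take any planar rooted tree $S$ and a basis element $c \in \Q[\Hom_{\mathcal{T}}(\Phi(S), T)] = (\Phi^* P_T^{\op})(S)$; factor the contraction $c$ as an edge-contraction $\pi \colon \Phi(S) \to \Phi(S)/F$ followed by an isomorphism. Contracting the same edge set $F$ inside $S$ is a planar contraction $S \to S/F$, and the planar rooted tree $S/F$ is isomorphic, uniquely since planar rooted trees are rigid, to one of the lifts $T_l$; transporting through this rigidification expresses $c$ as the image of the appropriate generator $\sigma$ under the $\mathcal{PT}^{\op}$-action. Hence $\Phi^* P_T^{\op}$, and therefore $\Phi^* M$ for any finitely generated $M$, is finitely generated.

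The main obstacle is precisely this last step, and the delicate point is the interaction between the vertex-permuting part of a general $\mathcal{T}$-contraction and the rigidity of $\mathcal{PT}$. One must verify that every contraction out of $\Phi(S)$ factors as a planar contraction followed by a tree isomorphism, so that the finitely many planar lifts together with the finite automorphism groups $\Aut_{\mathcal{T}}(T)$ account for all morphisms; the finiteness of both the lift set and these automorphism groups is what keeps the generating set finite. With finite generation of $\Phi^* M$ in hand, Theorem \ref{mainthm} immediately gives that $H_{\Phi^* M}(t) = HD_M(t)$ is algebraic, completing the proof.
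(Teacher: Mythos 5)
Your proposal is correct and follows essentially the same route as the paper: pull back along the forgetful functor $\mathcal{PT}\to\mathcal{T}$, identify the Hilbert--Dyck series with the ordinary Hilbert series of the pulled-back module, and reduce to Theorem \ref{mainthm} via preservation of finite generation. The only difference is that the paper outsources that last step to the property (F) results of \cite{PR-trees} and \cite{sam}, whereas you verify it directly on principal projectives using the finiteness of planar lifts and of $\Aut_{\mathcal{T}}(T)$ together with the rigidity of planar rooted trees.
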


One nice property of algebraic generating functions is their asymptotics are fairly predictable. For instance, one has the following fact.

\begin{fact}
Let $(f_n)_{n \geq 0}$ be a sequence of natural numbers such that $F(t) := \sum_{n \geq 0} f_n t^n$ is an algebraic function. Further assume that $F(t)$ has a unique singularity at its radius of convergence. Then there exist constants $C,\rho \in \R, \alpha \in \Q$, such that $f_n$ is asymptotically close to $Cn^\alpha \rho^n$. That is to say, 
\[
\lim_{n \to \infty} \frac{f_n}{Cn^\alpha \rho^n} = 1.
\]
\end{fact}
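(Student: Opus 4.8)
The plan is to recognize this as a standard application of the \emph{singularity analysis} of Flajolet and Odlyzko (see Flajolet and Sedgewick, \emph{Analytic Combinatorics}): one locates the dominant singularity, determines the local Puiseux expansion of $F$ there, and transfers that local behaviour to the coefficients $f_n = [t^n]F(t)$. I begin by recording the global structure forced by algebraicity. Since $F$ is algebraic it satisfies $P(t,F(t)) = 0$ for some nonzero $P \in \mathbb{C}[t,y]$, and its singularities are confined to the finite set of zeros of the discriminant $\operatorname{disc}_y P$ together with the zeros of the leading coefficient of $P$ in $y$; in particular $F$ has only finitely many singularities. Let $R$ denote its radius of convergence and set $\rho = 1/R$. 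Because the $f_n$ are nonnegative, Pringsheim's theorem guarantees that $t = R$ is a singularity, and by hypothesis it is the \emph{unique} singularity on the circle $|t| = R$.

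Next I would extract the local behaviour at $t = R$. By the Newton--Puiseux theorem, each branch of the curve $P(t,y) = 0$ near $t = R$ has a convergent expansion in fractional powers of $(1 - t/R)$; the branch agreeing with the power series of $F$ on $(0,R)$ therefore admits an expansion
\[
F(t) = \sum_{k \ge k_0} a_k\,(1 - t/R)^{k/d}
\]
for some ramification index $d \ge 1$ and some $k_0 \in \Z$, valid in a slit neighbourhood of $R$. Were every exponent a nonnegative integer, $F$ would be analytic at $R$, contradicting that $R$ is a singularity; hence there is a smallest exponent $\theta = k_1/d \notin \Z_{\ge 0}$ with $a_{k_1} \ne 0$, and
\[
F(t) = A(t) + c\,(1 - t/R)^{\theta} + O\!\big((1 - t/R)^{\theta'}\big), \qquad t \to R,
\]
where $A$ is analytic at $R$, $c = a_{k_1} \ne 0$, and $\theta' > \theta$ is the next exponent. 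Being a Puiseux exponent, $\theta$ is rational; it is either a negative integer (a pole) or a genuine fractional exponent (a branch point), and in either case $-\theta \notin \{0,-1,-2,\dots\}$, so $\Gamma(-\theta)$ is finite and nonzero.

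Finally I would apply the transfer theorem. The hypothesis that $R$ is the only singularity on $|t| = R$, together with the finiteness of the singular set, allows $F$ to be analytically continued to a $\Delta$-domain, a disc of radius slightly larger than $R$ with a narrow slit removed at $R$. On such a domain the Flajolet--Odlyzko transfer lemma gives
\[
[t^n]\,(1 - t/R)^{\theta} \sim \frac{n^{-\theta - 1}}{\Gamma(-\theta)}\,R^{-n},
\]
while the analytic part $A$ contributes nothing to leading order and the $O((1-t/R)^{\theta'})$ remainder contributes coefficients of strictly smaller order $n^{-\theta'-1}R^{-n}$. Summing,
\[
f_n = [t^n]F(t) \sim \frac{c}{\Gamma(-\theta)}\, n^{-\theta-1}\,\rho^{n},
\]
which is the assertion, with $C = c/\Gamma(-\theta)$, $\alpha = -\theta - 1 \in \Q$, and $\rho = 1/R$ (all real, since $f_n \in \N$).

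The main obstacle is the transfer step, whose delicate points are threefold: (i) justifying the Newton--Puiseux expansion and pinning down the leading singular exponent in the precise rational form above; (ii) establishing the analytic continuation to a $\Delta$-domain, which is exactly where the unique-dominant-singularity hypothesis is indispensable, since a second singularity on $|t| = R$ would obstruct the deformation of the integration contour; and (iii) the Hankel-contour estimate underlying the transfer lemma, which converts the local $(1-t/R)^{\theta}$ behaviour into coefficient asymptotics while controlling the remainder uniformly. Granting the machinery of singularity analysis these are routine, and the only inputs genuinely supplied by our setting are the nonnegativity of the coefficients (for Pringsheim) and the stated uniqueness of the dominant singularity.
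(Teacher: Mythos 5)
The paper offers no proof of this Fact: it is quoted as a known result from the analytic-combinatorics literature, with the subsequent remark pointing to the reference \cite{BD} of Banderier and Drmota. Your proposal is the standard argument (Pringsheim to place the dominant singularity on the positive axis, Newton--Puiseux to get the local expansion with a smallest non-integral exponent $\theta \in \Q$, then Flajolet--Odlyzko transfer on a $\Delta$-domain, which is exactly where the unique-dominant-singularity hypothesis enters), and it is correct modulo the cited transfer machinery; in particular you rightly note that $\Gamma(-\theta)$ is finite and nonzero precisely because $\theta$ is not a nonnegative integer, so the leading constant $C = c/\Gamma(-\theta)$ does not degenerate.
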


\begin{remark}
The requirement that the generating function has a unique singularity on its radius of convergence is not strictly necessary, though the statement is more complicated if we do not assert it. In this more technical case, the ultimate conclusion is only true up to residue classes of $n$ (see \cite{BD}).
\end{remark}

This fact can be observed explicitly in the case where $f_n = c_n$ is $n$-th Catalan number. In this case we have, from Sterling's approximation,
\[
f_n \cong \frac{1}{\sqrt{\pi}} n^{-3/2} 4^n.
\]

Coming back to the context of Theorem \ref{mainthm}, we see that if you look at the total dimension
\[
\sum_{|E_T| = n} \dim_\Q M(T)
\]
then (possibly up to the residue class of $n$), it grows at worst like some power of $n$ times an exponential. That is, it grows at worst exponentially. This is consistent with the fact, proven in \cite{PR-trees}, that each individual vector space $M(T)$ is bounded by a polynomial in the number of edges of $T$, whenever $T$ is sufficiently large. Unfortunately the techniques of the current paper do not immediately recover the constants $C,\alpha,$ and $\rho$. It would be interesting to see whether this can be done in general, and how they compare to the analogous constants for the generating function of the Catalan numbers.

\subsection{Other categories of graphs}

The current work mainly considers the category of trees with contractions. However, Because edge contractions are homotopy equivalences, they preserve the first Betti-number, or \emph{genus}, of the graph. This shows that the category of all graphs and contractions is stratified by this genus invariant, where the tree case is only one stratum. The work \cite{PR-genus} shows that other strata of the category of all graphs and edge contractions are also of great interest.

The techniques of this paper will generalize to these other strata, although statements of theorems become considerably more difficult. Indeed, by looking at spanning trees, one may think of a higher genus graph as being a tree decorated with the data of how the extra edges are attached to each vertex. This is exemplified in the category $\cPGg$ discussed in \cite{PR-genus}. Instead of working with Dyck paths, one instead must work with Dyck paths that are decorated with this finite amount of extra data. In particular, one may define generalized Hilbert-Dyck series and prove they are algebraic. To the author's knowledge, these decorated Dyck paths have not appeared in the literature, and it is therefore unclear whether they are of any particular interest. For this reason, we do not pursue this direction further.

That being said, however, it is certainly possible that these more general categories of graphs have differently defined Hilbert series that admit nice formulas. We leave this as an avenue for possible future research. There is particular interest in understanding Hilbert series of the Graph minor category, as described in \cite{MPR}.

\section*{Acknowledgements}
The author was supported by NSF grant DMS-1704811. He would like to send thanks to Ben Young for various conversations that were useful during the creation of this work. He would also like to send thanks to Nick Proudfoot, whose editorial suggestions vastly improved the quality of the writing.

\section{Background}

\subsection{Categories of trees}

In this section, we outline the three main categories whose representations will be studied in this work. Most of what follows can be found in \cite{PR-trees}, and \cite{Barter}.

\begin{definition}\label{contraction}
A \textbf{tree} is a one-dimensional contractible CW-complex. A \textbf{rooted tree} is a tree paired with a choice of vertex called the \textbf{root}. This choice of root implicitly directs the edges of the tree away from the root. A \textbf{planar rooted tree}, or just a \textbf{planar tree}, is a rooted tree equipped with well-orderings on the sets of edges leaving each vertex. Planar trees have a natural well-ordering on their vertices via a depth-first search from the root.

Given trees $T,T'$, a \textbf{contraction} from $T$ to $T'$ is a map of sets
\[
\varphi:V_T \sqcup E_T \rightarrow V_{T'} \sqcup E_{T'}
\]
satisfying:
\begin{itemize}
\item $\varphi(V_T) = V_{T'}$;
\item for every $e' \in E_{T'}$ there exists a unique edge $e \in E_T$ with $\varphi(e) = e'$;
\item for every $e = \{x,y\} \in E_T$, if $\varphi(e) = v' \in V_{T'}$ then $\varphi(x) = \varphi(y) = v'$, while if $\varphi(e) = e' \in E_{T'}$ then $e' = \{\varphi(x),\varphi(y)\}$;
\item for every $v' \in V_{T'}$, the preimage $\varphi^{-1}(v') \subseteq V_T \sqcup E_T$ consists of the edges and vertices of some subtree of $T$.
\end{itemize}
Given two rooted trees, a \textbf{rooted contraction} between them is a contraction of the underlying trees which preserves the root. Finally, a \textbf{planar contraction} between planar trees $\varphi:T \rightarrow T'$ is a rooted contraction with the property that given two vertices $v_1',v_2' \in V_{T}$ such that $v_1' < v_2'$ in the depth-first order, one has that the vertex in $\varphi^{-1}(v_1')$ closest to the root is smaller than the vertex in $\varphi(v_2')$ closest to the root, in the depth-first order.

Finally, we will write $\mathcal{T}$ for the category of trees with contractions, $\mathcal{RT}$ for the category of rooted trees with rooted contractions, and $\mathcal{PT}$ for the category of planar trees with planar contractions.
\end{definition}

In this paper, we will be largely concerned with the representation theory of the categories $\mathcal{T}$, $\mathcal{RT}$, and $\mathcal{PT}$. The study of such objects was essentially initiated by Barter \cite{Barter}, although a different language was used in that work. In the precursers to the current paper \cite{PR-trees, PR-genus}, Proudfoot and the author prove that the categories presented above are equivalent to those considered by Barter.

\begin{definition}\label{fgdef}
Let $\mathcal{C}$ denote anyone of the categories $\mathcal{T}$, $\mathcal{RT}$, or $\mathcal{PT}$. Then a \textbf{representation of $\mathcal{C}^{op}$} or a \textbf{$\mathcal{C}^{op}$-module} is a contravariant functor
\[
M: \mathcal{C} \rightarrow \Vect_{\Q}
\]
where $\Vect_{\Q}$ is the category of finite dimensional vector spaces over $\Q$. Equivalently, a $\mathcal{C}^{op}$-module is a functor
\[
M: \mathcal{C}^{op} \rightarrow \Vect_{\Q}.
\]

We say that a $\mathcal{C}^{op}$-module $M$ is \textbf{finitely generated} if there exists a finite list of trees (or rooted trees, or planar trees) $\{T_i\}_{i \in I}$ such that for any tree $T \notin \{T_i\}_{i \in I}$, the vector space $M(T)$ is spanned by the images
\[
M(\varphi):M(T_i) \rightarrow M(T),
\]
where $\varphi:T \rightarrow T_i$ is a contraction. We call the trees $\{T_i\}$ the \textbf{generators} of the module $M$, and say $\{T_i\}$ \textbf{generates} $M$.
\end{definition}

\begin{remark}
The category of $\mathcal{C}^{op}$-modules is abelian, with the standard abelian operations defined point-wise. In particular, we can reuse terms from the language of modules over a ring without ambiguity.
\end{remark}

One of the most important properties of finitely generated $\mathcal{C}^{op}$-modules is the Noetherian property.

\begin{theorem}[\cite{Barter}, \cite{PR-trees}]
If $M$ is a finitely generated $\mathcal{C}^{op}$-module, then all submodules of $M$ are also finitely generated.
\end{theorem}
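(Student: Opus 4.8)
The plan is to deduce the statement from the Gröbner-theoretic machinery of Sam and Snowden, the same framework advertised for the Hilbert series results. By the usual reductions it suffices to prove that each representable (principal projective) $\mathcal{C}^{\op}$-module is a Noetherian object. For a tree $S$, the representable module is $P_S = \Q[\Hom_{\mathcal{C}}(-,S)]$, the free vector space on contractions into $S$; it is contravariant in its argument because $\Hom_{\mathcal{C}}(-,S)$ is. Any finitely generated $M$, generated by trees $\{T_i\}$, is a quotient of the finite direct sum $\bigoplus_i P_{T_i}$ via the map supplied by Yoneda, finite generation being exactly the surjectivity of this map. Since finite direct sums, subobjects, and quotients of Noetherian objects are again Noetherian, the whole theorem follows once every $P_S$ is shown to be Noetherian.

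First I would set up the combinatorial model. The \emph{monomials} of $P_S$ are the contractions $\varphi\colon T\to S$, as $T$ ranges over all objects of $\mathcal{C}$. The $\mathcal{C}^{\op}$-action sends such a $\varphi$, along a contraction $c\colon T'\to T$, to the composite $\varphi\circ c\colon T'\to S$. This makes the set of monomials into a poset under the divisibility relation $\varphi\preceq\psi$, meaning $\psi=\varphi\circ c$ for some contraction $c$. Submodules of $P_S$ are then analyzed by the leading-term formalism: fixing an admissible well-ordering on monomials, each submodule $N$ produces an initial up-set $\mathrm{in}(N)$ in this poset, and $N$ is finitely generated provided $\mathrm{in}(N)$ has finitely many minimal elements. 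Thus the problem reduces to two combinatorial facts about the monomial poset: (i) it carries an admissible well-ordering, and (ii) it is a well-quasi-order.

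For (i), I would order contractions first by the number of edges of the source — a well-founded, action-compatible statistic, since composing with $c$ only increases the edge count of the source — and break ties lexicographically using the depth-first vertex order that planar and rooted trees carry; for $\mathcal{T}$ one first fixes a planar structure. For (ii), note that the poset is automatically well-founded (the edge count strictly drops along any proper division), so the real content is the absence of infinite antichains. The essential input here is Kruskal's tree theorem: finite trees are well-quasi-ordered under the homeomorphic-embedding (topological-minor) relation, with a labeled version accommodating the map to $S$ and, in the $\mathcal{RT}$ and $\mathcal{PT}$ cases, the root and the planar orderings. Given an infinite sequence of contractions $\varphi_n\colon T_n\to S$, Kruskal's theorem yields indices $i<j$ together with a structure-compatible embedding of $T_i$ into $T_j$.

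The main obstacle is precisely the last implication inside (ii): one must upgrade such a Kruskal embedding into a genuine factorization $\varphi_j=\varphi_i\circ c$ in the category, that is, produce an honest contraction $c\colon T_j\to T_i$ lying over $S$. This is where the defining properties of a contraction — that the fibers are subtrees and that the depth-first and planar orderings are respected — must be reconciled with the embedding delivered by the well-quasi-order argument, and it is the step that genuinely differs among $\mathcal{T}$, $\mathcal{RT}$, and $\mathcal{PT}$. Once this compatibility is in hand, the well-quasi-order property forces $\mathrm{in}(N)$ to have finitely many minimal monomials; lifting these to elements of $N$ yields a finite generating set, and the Noetherian property follows.
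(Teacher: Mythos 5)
This theorem is quoted in the paper from \cite{Barter} and \cite{PR-trees} rather than proved there, so the comparison is with the cited arguments. Your high-level architecture is the right one and matches theirs: reduce to principal projectives $P_S$, view contractions into $S$ as monomials with the divisibility order, and invoke the Sam--Snowden Gr\"obner formalism, which requires an admissible well-order plus the well-quasi-order property of the divisibility poset. The problem is that you have left the actual mathematical content unproved. The entire difficulty of the theorem is concentrated in your step (ii), and you say so yourself: given contractions $\varphi_i\colon T_i\to S$ and $\varphi_j\colon T_j\to S$ and a Kruskal-type homeomorphic embedding $T_i\hookrightarrow T_j$, one must manufacture a contraction $c\colon T_j\to T_i$ with $\varphi_j=\varphi_i\circ c$. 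A topological-minor embedding goes in the wrong direction and carries the wrong data: it tells you where a subdivided copy of $T_i$ sits inside $T_j$, but it does not tell you how to assign the remaining vertices and branches of $T_j$ to fibers that are subtrees, nor does it guarantee compatibility with the two maps to $S$, nor (in the planar case) with the depth-first order condition in Definition \ref{contraction}. Naming this as ``the main obstacle'' and stopping is not a proof; as written, the well-quasi-order claim --- which is the theorem --- is assumed rather than established. Barter's actual argument avoids this bridge entirely: he encodes morphisms out of a fixed planar tree as Catalan words (the modified Dyck paths of Remark \ref{biggerwords}) and proves directly that the resulting word poset, under the strong-containment order this paper also uses, is a well partial order by a Higman/minimal-bad-sequence argument on words, not by Kruskal's tree theorem on trees.

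Two smaller gaps. First, your admissible order is underspecified: ``break ties lexicographically using the depth-first vertex order'' must be checked to be compatible with precomposition (each map $-\circ c$ on monomials must be strictly order-preserving), and this is exactly what the word encoding is designed to make transparent. Second, ``for $\mathcal{T}$ one first fixes a planar structure'' glosses over a real issue: $\mathcal{T}$ and $\mathcal{RT}$ have nontrivial automorphisms, so they cannot be Gr\"obner and their hom-sets do not even form posets in the required sense. The cited proofs handle them by showing the forgetful functors $\mathcal{PT}\to\mathcal{RT}\to\mathcal{T}$ satisfy property (F) (quasi-Gr\"obner descent), which is the same mechanism this paper uses to pass from Theorem \ref{mainthm} to Theorem \ref{mainthm2}; that reduction needs to be stated and justified, not folded into a clause.
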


In this paper we consider the types of growth that can appear in the dimensions of the vector spaces $M(T)$. This question was partially considered in the precursor work \cite{PR-trees}, where the following is proved.

\begin{theorem}[\cite{PR-trees}]
Let $M$ be a finitely generated $\mathcal{C}^{op}$-module. Then there exists a polynomial $P_M(t) \in \Q[t]$ such that for all trees with $|E_T| \gg 0$, one has
\[
\dim_\Q(M(T)) \leq P_M(|E_T|).
\]
\end{theorem}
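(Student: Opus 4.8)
The plan is to bound $\dim_\Q M(T)$ directly from the definition of finite generation and then control the only quantity that can grow with $T$, namely the number of contractions into a fixed generator. Since $M$ is finitely generated, fix a finite generating set $\{T_i\}_{i \in I}$. For any tree $T$ not among the generators, the space $M(T)$ is spanned by the union of the images $\im M(\varphi)$ as $\varphi$ ranges over all contractions $\varphi \colon T \to T_i$ and $i$ ranges over $I$. Hence
\[
\dim_\Q M(T) \;\le\; \sum_{i \in I} |\Hom_{\mathcal{C}}(T, T_i)| \cdot \dim_\Q M(T_i),
\]
because each image has dimension at most $\dim_\Q M(T_i)$ and there are $|\Hom_{\mathcal{C}}(T,T_i)|$ of them. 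As $I$ is finite and every $M(T_i)$ is finite dimensional, the factors $\dim_\Q M(T_i)$ are constants, so the whole problem reduces to bounding $|\Hom_{\mathcal{C}}(T, S)|$ for a fixed tree $S$ as $|E_T|$ grows.

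First I would record the combinatorial structure of a contraction. Given a subset $F \subseteq E_T$, contracting the edges of $F$ produces a tree $T/F$ whose vertices are the connected components of the subforest $(V_T, F)$ and whose edges are $E_T \smallsetminus F$; every such $F$ is admissible, since components of a subforest of a tree are automatically subtrees. A contraction $\varphi \colon T \to S$ is then the same datum as a choice of the set $E_T \smallsetminus F$ of surviving edges (those sent to edges of $S$), subject to $T/F \cong S$, together with an isomorphism $T/F \xrightarrow{\sim} S$: the axioms of Definition \ref{contraction} force $\varphi$ on vertices to be the quotient map followed by this isomorphism, and on edges to be the induced bijection on surviving edges together with the collapse of the rest. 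This sets up a bijection between $\Hom_{\mathcal{C}}(T,S)$ and pairs consisting of an admissible surviving set and a compatible isomorphism.

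Counting these pairs gives the bound. A surviving set has exactly $|E_S|$ elements, so there are at most $\binom{|E_T|}{|E_S|}$ of them, and for each one the number of isomorphisms $T/F \to S$ is at most $|\Aut(S)|$ (in the rooted or planar case, at most the number of root-preserving, resp.\ planar, automorphisms, which is still a constant depending only on $S$). Therefore
\[
|\Hom_{\mathcal{C}}(T, S)| \;\le\; |\Aut(S)| \binom{|E_T|}{|E_S|},
\]
which is a polynomial in $|E_T|$ of degree $|E_S|$. Substituting into the first display yields
\[
\dim_\Q M(T) \;\le\; \sum_{i \in I} |\Aut(T_i)| \binom{|E_T|}{|E_{T_i}|} \dim_\Q M(T_i) =: P_M(|E_T|),
\]
a polynomial of degree $\max_{i} |E_{T_i}|$, valid for all $T$ outside the finite generating set, hence for $|E_T| \gg 0$.

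The argument is essentially uniform across $\mathcal{T}$, $\mathcal{RT}$, and $\mathcal{PT}$, since passing to rooted or planar structure only imposes extra constraints on the admissible surviving sets and on the allowable isomorphisms, so the same binomial bound applies. The one step deserving genuine care — and the place I expect the only real friction — is verifying the claimed bijection between contractions and (surviving set, isomorphism) pairs: one must check against all four axioms of Definition \ref{contraction} that every admissible $F$ with $T/F \cong S$ produces a valid contraction and, conversely, that a contraction is recovered uniquely from its set of surviving edges and its induced isomorphism. Once that dictionary is in place the polynomial bound is immediate, and its degree is pinned down by the largest generator.
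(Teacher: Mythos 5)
Your argument is correct: the paper itself does not prove this statement (it is imported from \cite{PR-trees}), and your proof is the standard one underlying such bounds, namely $\dim_\Q M(T) \le \sum_i |\Hom_{\mathcal{C}}(T,T_i)|\dim_\Q M(T_i)$ combined with the observation that a contraction $T \to S$ is determined by its set of contracted edges together with an isomorphism $T/F \xrightarrow{\sim} S$, giving the bound $|\Aut(S)|\binom{|E_T|}{|E_S|}$. The dictionary you flag as the delicate step does hold (distinct components of $(V_T,F)$ must go to distinct vertices by the subtree-preimage axiom, so the vertex map is forced to be the quotient map followed by the isomorphism), and the remark that the rooted and planar cases only shrink the morphism count is exactly right.
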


\cite{PR-trees} also proves results which show how this polynomial behavior is sharp, so long as you vary the trees within certain natural families of trees. In this work we consider growth as it pertains to the module $M$ as a whole, instead of how it pertains to the individual vector spaces which comprise it.

\begin{definition}\label{hildef}
It is a well known fact that planar rooted trees with $n$ edges are in bijection with \textbf{Dyck paths of length $2n$}. A Dyck path is a word of even length $2n$ in the alphabet $\{u,d\}$ such that each of the characters $u$ and $d$ appear exactly $n$ times and up to any $i$, the sub word of letters up to index $i$ has no more $d$'s than $u$'s.

Given a Dyck path $w$ of length $2n$, we write $T(w)$ (resp. $T_r(w)$, resp. $T_p(w)$) to denote the tree (resp. rooted tree, resp. planar rooted tree) associated to $w$. Note that $T(w)$ and $T_r(w)$ do not uniquely determine the original word $w$, though every tree and rooted tree can be written in this form for some $w$.

Let $M$ denote a finitely generated $\mathcal{T}^{op}$-module. Then the Hilbert-Dyck series associated to $M$ is the formal power series
\[
HD_M(t) = \sum_{w} \dim_\Q(M(T(w))) t^{|E(T(w))|},
\]
where the sum is over all Dyck paths. Note that $|E(T(w))| = l(w)/2$, where $l(w)$ is the length of the word $w$. We similarly define Hilbert-Dyck series for modules over the category $\mathcal{RT}$.
\end{definition}

\begin{example}
Consider the $\mathcal{T}^{op}$-module which assigns to every tree the vector space $\Q$, and to every contraction the identity map. This is sometimes referred to as the trivial $\mathcal{T}^{op}$-module. Then we have
\[
HD_M(t) = \sum_{n \geq 1} c_{n} t^n,
\]
where $c_{n}$ is the number of Dyck paths of length $n$, i.e. the $n$-th Catalan number. In particular, $HD_M(t)$ is precisely the generating function for the Catalan numbers.

Note that, if instead $M$ was the $\mathcal{RT}$-module (resp. $\mathcal{PT}$-module) which assigns $\Q$ to every rooted (resp. planar rooted) tree, then $HD_M(t)$ (resp. $H_M(t)$) is identical to the above.
\end{example}

It is a well-known fact that the generating function for the Catalan numbers is \textbf{algebraic}. That is, it satisfies a polynomial equation with coefficients in $\Q(n)$. Our main result can therefore be seen as a categorification of this fact. See \cite{BM} for a comprehensive treatment of algebraic generating functions and their applications.

\subsection{PDA's and context-free languages}

In this section we discuss the theory of Push-down Automata (PDA) and their associated context-free languages. See \cite{ABB} for a standard reference. Before we dive into the somewhat intimidating formalities of the subject, we take a moment to try to develop the basic intuition for what PDAs are designed to accomplish.

\begin{definition}
Let $\Sigma$ be a finite set. Then we define the \textbf{Kleene star} $\Sigma^\ast$ to be the free monoid generated by the set $\Sigma$. A \textbf{language $\mathcal{L}$ with alphabet $\mathbf{\Sigma}$} is just any subset of $\Sigma^\ast$. Given a word $w \in \mathcal{L}$, we write $l(w)$ to denote the \textbf{length of $w$}. That is, the number of elements of $\Sigma$ which appear in $w$.

In this paper, we follow the standard practice of the field and reserve the symbol $\epsilon$ to denote the empty word.
\end{definition}

\begin{remark}
Much of what follows will actually work for any \textbf{norm} on the language $\mathcal{L}$, not just the length. This level of generality will not be necessary for us.
\end{remark}

Complexity in language theory is concerned with two distinct, but essentially equivalent, perspectives. The first perspective is the question of how complicated a grammar needs to be in order to build the language from its alphabet. The second perspective is the question of how sophisticated a machine needs to be to be able to detect whether a given word is in the language. The simplest possible machines are finite state automata. These machines have finitely many states, and a finite list of rules which allow one to move between states given an input element of $\Sigma$. The kinds of languages whose inclusion problem can be solved by finite state automata are the so-called \textbf{regular languages}. In this paper we will largely be concerned with machines that are one step higher in complexity: finite automata equipped with memory in the form of a stack.

\begin{definition}
A \textbf{push-down automaton}, or \textbf{PDA}, is a 7-tuple $P = (Q,\Sigma,\Gamma,\delta,q_0,Z,F)$, where:
\begin{itemize}
\item $Q$ is a finite set called the \textbf{states} of $P$;
\item $\Sigma$ is a finite set, disjoint from $Q$, called the \textbf{alphabet} of $P$;
\item $\Gamma$ is a finite set, disjoint from $\Sigma$ and $Q$, called the \textbf{stack symbols} of $P$;
\item $\delta: Q \times (\Sigma \cup \{\epsilon\}) \times (\Gamma \cup \{\epsilon\}) \rightarrow \mathcal{P}(Q \times \Gamma^\ast)$, where $\mathcal{P}$ denotes the power set, is the \textbf{transition function} of $P$;
\item $q_0 \in Q$ is the \textbf{initial state} of $P$;
\item $Z \in \Gamma$ is the \textbf{initial stack symbol} of $P$;
\item $F \subseteq Q$ is the set of \textbf{final states} of $P$.
\end{itemize}

An \textbf{instantaneous description} of $P$ is a triple, $(q,w,S) \in Q \times \Sigma^\ast \times \Gamma^\ast$. We interpret an instantaneous description as telling us which state we are currently in, the remainder of the word that is currently being processed, and the contents of the stack, where we understand the left most symbol of $S$ as being the top of the stack. If $(q,aw,AS)$ is an instantaneous description with $a \in \Sigma \cup \{\epsilon\}$ and $A \in \Gamma \cup \{\epsilon\}$, then we write
\begin{eqnarray}
(q,aw,AS) \mapsto (q',w,\alpha S) \label{simplemove}
\end{eqnarray}
if $(q',\alpha) \in \delta(q,a,A)$. More generally, if $(q,w,S)$ and $(q',w',S')$ are two instantaneous descriptions of $P$, then we write
\[
(q,w,S) \mapsto (q',w',S')
\]
if there is a series of moves of the form (\ref{simplemove}) transforming $(q,w,S)$ into $(q',w',S')$. Finally, we say that $P$ \textbf{recognizes} a word $w \in \Sigma^\ast$ if
\[
(q_0,w,Z) \mapsto (q_r,\epsilon,S)
\]
where $q_0$ and $Z$ are the initial state and stack symbol, respectively, $S \in \Gamma^\ast$, and $q_r \in F$ is a final state. The \textbf{language $\mathcal{L}(P)$ of $P$} is the set of all words that are recognized by $P$. We call $P$, as well as the language $\mathcal{L}(P)$, \textbf{unambiguous} if for any $w \in \mathcal{L}(P)$ there is precisely one sequence of moves of the form (\ref{simplemove}) that leads to a final state.
\end{definition}

We think of a push-down automaton $P = (Q,\Sigma,\Gamma,\delta,q_0,Z,F)$ as being a machine that inputs a word $w$ and outputs either "yes" or "no." It does so in the following way: Writing our word as $w = aw'$, with $a \in \Sigma$, it reads the letter $a$ as well as the top of the stack, $Z$, and checks its available moves, as prescribed by $\delta(q_0,a,Z)$. These moves may include popping the top of the stack, pushing more symbols onto the stack, or some combination of both, along with a possible jump to a new state. If there are no available moves, then the machine outputs "no." Otherwise, it continues reading the remaining word $w'$ in this way. When the entire word has been read, if the machine is in a final state it outputs "yes," while otherwise it outputs "no."

One should observe that the transition function of a PDA is permitted to read $\epsilon$ for either the input letter or the top stack symbol. This does \emph{not} signify that the input word or stack must be empty for this transition to occur. It is more correct to interpret these transitions (sometimes called $\epsilon$-moves in the literature) as saying that this transition can happen regardless of what the next input letter (or top of the stack) is. We will see examples of these kinds of transitions during the proof of the main theorem.\\

Languages which are of the form $\mathcal{L}(P)$ for some PDA $P$ are called \textbf{context-free}. Importantly for us, one has the following foundational result about context-free languages.

\begin{definition}\label{LangHS}
Let $\mathcal{L}$ be a language over some finite alphabet $\Sigma$. Then the \textbf{generating function of $\mathcal{L}$} is the formal power series
\[
H_{\mathcal{L}}(t) := \sum_{w \in \mathcal{L}} t^{l(w)},
\]
where $l(w)$ is the length of the word $w \in \mathcal{L}$.
\end{definition}

\begin{theorem}[Proposition 3.7, \cite{BM}]
Let $\mathcal{L}$ be a context-free language associated to an unambiguous PDA. Then $H_{\mathcal{L}}(t)$ is algebraic.
\end{theorem}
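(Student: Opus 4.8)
The plan is to follow the classical Chomsky--Sch\"utzenberger strategy: pass from the unambiguous PDA $P$ to an unambiguous context-free grammar whose derivation trees are in bijection with the accepting computations of $P$, and then extract from the grammar a finite system of polynomial equations satisfied by the generating functions of its nonterminals. Algebraicity of $H_{\mathcal{L}}(t)$ will then follow by eliminating all variables but the start variable from this system.

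First I would normalize $P$ so that it accepts by empty stack rather than by final state. This is a standard modification---introduce a fresh bottom-of-stack marker and a cleanup state---that preserves the recognized language, and I would check that it preserves unambiguity, since final-state runs and empty-stack runs correspond bijectively. Next I would carry out the triple construction: introduce one nonterminal $[q,A,p]$ for each stack symbol $A \in \Gamma$ and each pair of states $q,p \in Q$, intended to generate exactly those input words $w$ for which $P$, started in state $q$ with $A$ alone on the stack, can read $w$, end in state $p$, and empty the stack. The productions are read off directly from the transition function $\delta$: a move on input $a \in \Sigma \cup \{\epsilon\}$ that replaces $A$ by $B_1 \cdots B_k$ and jumps from $q$ to $q'$ yields productions $[q,A,p] \to a\,[q',B_1,r_1][r_1,B_2,r_2]\cdots[r_{k-1},B_k,p]$ ranging over intermediate states $r_1, \dots, r_{k-1}$, and the start symbol $S$ derives $\sum_{p} [q_0,Z,p]$. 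The crucial point is that this construction is a bijection at the level of computations: each leftmost derivation of a word $w$ corresponds to exactly one accepting run of $P$ on $w$. Because $P$ is unambiguous, each accepted word has a single accepting run, hence a single derivation tree, so the grammar is unambiguous.

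With an unambiguous grammar in hand, I would assign to each nonterminal $X$ the formal power series $F_X(t) = \sum_w t^{l(w)}$, where the sum ranges over all words $w$ derivable from $X$; unambiguity guarantees that each such $w$ is counted exactly once, so $F_X$ is a genuine language generating function and a well-defined element of $\Q[[t]]$. Each production $X \to \beta_1 \cdots \beta_m$ contributes the monomial $\prod_{i} g(\beta_i)$, where $g(\beta_i) = t$ when $\beta_i$ is a terminal and $g(\beta_i) = F_{\beta_i}$ when $\beta_i$ is a nonterminal; summing over all productions for $X$ produces a polynomial identity $F_X = \Phi_X\big((F_Y)_Y, t\big)$ with $\Phi_X \in \Q[(F_Y)_Y, t]$. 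This gives a finite system of polynomial equations over $\Q[t]$, and $H_{\mathcal{L}}(t) = F_S(t)$ for the start symbol $S$.

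Finally, algebraicity is a purely algebraic consequence of this system: any power-series solution of a system of polynomial equations over $\Q[t]$ has each coordinate algebraic over $\Q(t)$, which one sees by successively eliminating the variables $F_Y$ with $Y \neq S$ via resultants until a single nonzero polynomial relation in $F_S$ and $t$ remains. I expect the main obstacle to be the careful bookkeeping in the triple construction that certifies the derivation-to-computation correspondence is a genuine bijection, since it is precisely this bijection---and not merely a surjection---that transports the unambiguity of $P$ to unambiguity of the grammar, and hence lets us count each word of $\mathcal{L}$ exactly once when forming the $F_X$.
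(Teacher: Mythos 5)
The paper does not actually prove this statement---it is quoted verbatim from Bousquet-M\'elou's survey \cite{BM}---and your argument is precisely the standard Chomsky--Sch\"utzenberger proof given in that source: unambiguous PDA $\to$ unambiguous grammar via the triple construction $\to$ proper polynomial system for the nonterminal generating functions $\to$ elimination over $\Q(t)$. Your proposal is correct and matches that cited proof; the only detail worth adding is that, before the triple construction, you must also normalize $P$ so that every transition pops exactly one stack symbol (the paper's definition of $\delta$ permits $\epsilon$ in the stack coordinate), a routine step which, like the empty-stack conversion, preserves unambiguity.
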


\begin{remark}
We will see that Hilbert-Dyck series are in fact always $\Z$-algebraic (See \cite{BD}). We do not make use of this distinction in this paper. 
\end{remark}

\begin{example}\label{cataex}
We have already seen that the Catalan numbers have an algebraic generating function. In fact, we can realize the Catalan numbers as the number of words of a given length in an unambiguous context-free language as follows.

Set $P =(Q,\Sigma,\Gamma,\delta,q_0,Z,F)$, where $Q = \{q_0,q_1\}$, $\Sigma = \{u,d\}$, $\Gamma = \{Z,A\}$, and $F = \{q_1\}$. Our transition function will be defined by the assignments:
\begin{align*}
\delta(q_0,u,Z) = (q_0,AZ)\\
\delta(q_0,u,A) = (q_0,AA)\\
\delta(q_0,d,A) = (q_0,\epsilon)\\
\delta(q_0,\epsilon,Z) = (q_1,\epsilon)
\end{align*}
Note that we follow the standard practice in the field that when the output of the transition function is a singleton, we suppress the set notation. Moreover, any transition whose output is the empty set is not written.

In words, the first two transitions indicate that when a $u$ is read by the PDA, the symbol $A$ is added to the top of the stack, while the third indicates that if a $d$ is read, the stack is popped. Finally, the last transition indicates that, at any time when the stack only contains the initial symbol, you may move on to the final state. It is clear from this description that $P$ is unambiguous, as the transition function has at most one move for any legal input. Moreover, a quick argument shows that $\mathcal{L}$ is precisely the language of Dyck paths. Our claim then follows from the fact that the number of Dyck paths of a given length agrees with the Catalan numbers.
\end{example}

\subsection{Lingual categories}
In their seminal work \cite{sam}, Sam and Snowden develop a kind of language theory for categories, which they call lingual categories. Roughly speaking, these are categories whose morphisms can be encoded as "well-behaved" languages. The upshot to this is one can use well known combinatorial theorems about the Hilbert series of these languages (See Definition \ref{LangHS}) to conclude non-trivial facts about the dimension growth of modules over the category. In particular, we have the following.

\begin{definition}\label{contextfreecat}
Let $\mathcal{C}$ denote an essentially small category with no non-trivial endmorphisms, and write $x$ for an object of $\mathcal{C}$. Then we write $|\mathcal{C}_x|$ for the set
\[
|\mathcal{C}_x| = \{f:x \rightarrow y \mid y \text{ is an object of $\mathcal{C}$}\} / \sim ,
\]
where $\sim$ is the relation
\[
f \sim g \iff h \circ f = g \text{ for some isomorphism $h$.}
\]
The set $|\mathcal{C}_x|$ can be enhanced with the structure of a poset, with order relation given by
\[
f \leq g \iff g = h \circ f \text{ for some morphism $h$.}
\]

We say that the category $\mathcal{C}$ is an \textbf{unambiguous and context-free} if the following four conditions hold:
\begin{itemize}
\item the category $\mathcal{C}$ is \textbf{Gr\"obner} in the sense of Sam and Snowden \cite{sam};
\item for every object $x$, there exists a set theoretic bijection
\[
\iota_x:|\mathcal{C}_x| \cong \mathcal{L}_x,
\]
where $\mathcal{L}_x$ is an unambiguous context-free language;

\item for every object $x$, and every order ideal $I$ of the poset $|\mathcal{C}_x|$, the image $\iota_x(I) \subseteq \mathcal{L}_x$ is also an unambiguous context-free language;

\item there exists a function $\nu$, called the \textbf{norm} of $\mathcal{C}$, from the set of isomorphism classes of objects of $\mathcal{C}$ to $\N$ such that for any object $x$ and any morphism $f:x \rightarrow y$,
\[
\nu(y) = l(\iota_x(f)).
\]
\end{itemize}
\end{definition}

The theory of Gr\"obner categories was developed by Sam and Snowden. One can think of this condition as saying that the representation theory of the category admits a theory of Gr\"obner bases. This notation was extended by Miyata, Proudfoot, and the author in \cite{MPR} to modules over categorical algebras. For the purposes of the present work, just note that the category $\mathcal{PT}^{op}$ was proved to be Gr\"obner by Barter in \cite{Barter}. We therefore do not need to worry too much about this condition going forward.

\begin{theorem}[\cite{sam}]\label{hilser}
Let $\mathcal{C}$ be an unambiguous context-free category with norm $\nu$, and let $M$ be a $\mathcal{C}$-module. If $M$ is finitely generated, then the formal power series
\[
H_{M,\nu}(t) := \sum_{x} \dim_\Q(M(x))t^{\nu(x)}
\]
is algebraic.
\end{theorem}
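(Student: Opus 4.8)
The plan is to reduce the computation of $H_{M,\nu}(t)$ to a finite sum of generating functions of unambiguous context-free languages, and then to invoke Proposition 3.7 of \cite{BM} together with the fact that algebraic power series are closed under finite sums. The two ingredients that make this reduction possible are the Gr\"obner hypothesis, which supplies a theory of leading terms and hence standard monomial bases, and the three remaining conditions of Definition \ref{contextfreecat}, which translate the relevant combinatorial objects into languages of the correct type.

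First I would record what happens for principal projectives. For an object $x$, let $P_x$ denote the principal projective $\mathcal{C}$-module with $P_x(y) = \Q[\Hom(x,y)]$. Because $\mathcal{C}$ has no non-trivial endomorphisms, between any two isomorphic objects there is a unique isomorphism, so grouping morphisms by the isomorphism class of their target identifies the basis of $P_x$ (summed over all objects) with the set $|\mathcal{C}_x|$. Using the norm condition $\nu(y) = l(\iota_x(f))$ for $f\colon x \to y$, this yields
\[
H_{P_x,\nu}(t) = \sum_{f \in |\mathcal{C}_x|} t^{\nu(\mathrm{target}(f))} = \sum_{w \in \mathcal{L}_x} t^{l(w)} = H_{\mathcal{L}_x}(t),
\]
which is algebraic since $\mathcal{L}_x$ is unambiguous context-free. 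More generally, for an order ideal $I \subseteq |\mathcal{C}_x|$ the same identity shows that the generating function of the order-ideal-restricted projective equals $H_{\iota_x(I)}(t)$, which is algebraic by the third bullet of Definition \ref{contextfreecat}.

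Now I would handle a general finitely generated $M$. Choosing generators gives a surjection $F := \bigoplus_{i=1}^{n} P_{x_i} \twoheadrightarrow M$ with kernel $K$. Invoking the Gr\"obner structure on $\mathcal{C}$, I would pass to the initial submodule $\operatorname{in}(K) \subseteq F$: since the module action sends a basis element (a morphism $f$) to a larger element in the poset order, $\operatorname{in}(K)$ is a monomial submodule whose intersection with each $P_{x_i}$ is spanned by an upward-closed subset of $|\mathcal{C}_{x_i}|$. Consequently its standard monomial complement meets each $P_{x_i}$ in an order ideal $I_i$, and the defining property of Gr\"obner bases gives $\dim_\Q M(y) = \dim_\Q (F/\operatorname{in}(K))(y)$ for every $y$. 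Summing degree by degree,
\[
H_{M,\nu}(t) = H_{F/\operatorname{in}(K),\nu}(t) = \sum_{i=1}^{n} H_{\iota_{x_i}(I_i)}(t),
\]
and each summand is algebraic by the previous paragraph; a finite sum of algebraic series is algebraic, proving the claim.

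The step I expect to be the main obstacle is the Gr\"obner reduction, namely justifying $\dim_\Q M(y) = \dim_\Q (F/\operatorname{in}(K))(y)$ and that the standard monomials organize into order ideals compatible with the bijections $\iota_{x_i}$. This is exactly where the first bullet of Definition \ref{contextfreecat} is used, and where one must check that the well-ordering underlying the Gr\"obner structure refines the poset order on each $|\mathcal{C}_{x_i}|$ so that leading-term arguments are available; for $\mathcal{PT}^{op}$ this is supplied by \cite{Barter}. Everything else, namely the projective computation, the norm bookkeeping, and the closure of algebraic functions under finite sums, is routine once this is in place.
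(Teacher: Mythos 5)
The paper does not prove this theorem---it is quoted directly from Sam and Snowden \cite{sam}---and your reconstruction (principal projectives give the languages $\mathcal{L}_x$, a Gr\"obner degeneration $\dim_\Q M(y)=\dim_\Q(F/\operatorname{in}(K))(y)$ reduces to standard monomials, and algebraicity passes through finite sums) is exactly the argument from that source, so it is essentially the intended proof. One small wrinkle: in this paper's Definition \ref{contextfreecat} and the subsequent proposition, the ``order ideals'' shown to yield unambiguous context-free languages are the \emph{upward-closed} sets $(\phi)$ (finite unions of principal ones by Noetherianity), so your standard-monomial set is the complement of such an ideal rather than an ideal itself, and its generating function should be obtained as the difference $H_{\mathcal{L}_{x_i}}(t)-H_{\iota_{x_i}(J_i)}(t)$ with $J_i$ the set of initial terms---still algebraic, so the conclusion is unaffected.
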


In view of Theorem \ref{hilser}, and Definition \ref{hildef}, our path forward has now become clear. Our first step will be to prove that the category $\mathcal{PT}^{op}$ is unambiguous and context-free, thereby generalizing the computation in Example \ref{cataex}. This will imply that the Hilbert series for finitely generated modules over $\mathcal{PT}$ are algebraic by Theorem \ref{hilser}. Following this, we leverage the fact that the forgetful functors $\mathcal{PT} \rightarrow \mathcal{RT}$ and $\mathcal{PT} \rightarrow \mathcal{T}$ have the so-called property (F) (see \cite{PR-trees} and \cite{sam}). In particular, pulling back any finitely generated $\mathcal{RT}^{op}$ or $\mathcal{T}^{op}$-module to a module over $\mathcal{PT}^{op}$ preserves finite generation. This will imply that Hilbert-Dyck series of finitely generated modules over the categories $\mathcal{RT}^{op}$ and $\mathcal{T}^{op}$ will be algebraic, as desired.

\section{The proof of the main theorem}

In this section, we prove our main theorem via the strategy just outlined. In particular, our ultimate goal is the prove the following.

\begin{theorem}\label{mainlangthm}
The category $\mathcal{PT}^{op}$ is unambiguous and context-free, with norm given by
\[
\nu(T) = 2\cdot (\# \text{ of edges of } T).
\]
\end{theorem}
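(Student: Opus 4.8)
The Gröbner condition is already in hand: Barter proved that $\mathcal{PT}^{\op}$ is Gröbner, so the first bullet of Definition \ref{contextfreecat} requires no further work, and the task reduces to producing the languages $\mathcal{L}_x$ and bijections $\iota_x$, checking the norm condition, and controlling order ideals. I would begin by unwinding what $|\mathcal{C}_x|$ parametrizes. A morphism $x \to y$ in $\mathcal{PT}^{\op}$ is exactly a planar contraction $g : y \to x$ in $\mathcal{PT}$, and because planar trees are rigid (they admit no nontrivial automorphisms) the relation $\sim$ merely identifies contractions with isomorphic sources; thus $|\mathcal{C}_x|$ is identified with the set of isomorphism classes of \emph{expansions} of $x$ into larger planar trees. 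Concretely such a $g$ is the data of a subtree $\varphi^{-1}(v) \subseteq y$ for each vertex $v$ of $x$, together with a bijection of the remaining edges of $y$ onto $E_x$, all compatible with the planar depth-first order of Definition \ref{contraction}.

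To build $\iota_x$, I would encode an expansion $g : y \to x$ by traversing $y$ via depth-first search from the root and recording, for each edge, an up-symbol as it is entered and a down-symbol as it is exited, exactly as in the Dyck-path encoding of Example \ref{cataex}. The crucial enhancement is to \emph{label} each step: an edge of $y$ that survives under $g$ is tagged by the edge of $E_x$ onto which it maps, while a contracted edge receives a neutral tag. The alphabet is then finite (indexed by $E_x$ together with one neutral pair), each edge of $y$ contributes precisely two letters, and so $\iota_x(g)$ has length $2|E_y| = \nu(y)$; this is exactly the fourth bullet, the norm condition. Rigidity makes $\iota_x$ injective, and its image $\mathcal{L}_x$ consists of the labelled Dyck words whose surviving letters, read in order, trace out the depth-first traversal of $x$ in a planar-compatible way. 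I would then exhibit an unambiguous PDA recognizing $\mathcal{L}_x$, generalizing Example \ref{cataex}: the stack validates the underlying balanced (Dyck) structure, and hence the shape of $y$, while the finitely many states record the current position within the fixed tree $x$, so that surviving edges are read off in the correct order and with consistent labels. Since the word dictates a unique traversal and the labels pin down the state transitions, at most one run reaches an accepting configuration, giving unambiguity and so the second bullet.

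The remaining and, I expect, most delicate point is the third bullet: for every order ideal $I \subseteq |\mathcal{C}_x|$ the image $\iota_x(I)$ must again be unambiguous context-free. Here $f \leq g$ precisely when the expansion encoded by $g$ factors through that encoded by $f$, so an order ideal is a family of expansions closed under passage to coarser expansions. The plan is to show that membership of a word in $\iota_x(I)$ is detectable by a PDA refining the one above: because the category is Gröbner, the complement of $I$ is generated by finitely many minimal expansions, and ``not refining any of a fixed finite list of sub-expansions'' is a condition a pushdown automaton can enforce by carrying bounded bookkeeping in its states while the stack continues to police the Dyck structure. Verifying this cleanly — confirming that the planar-compatibility constraint of Definition \ref{contraction} interacts correctly with the factorization order, and that unambiguity survives the refinement — is the main obstacle, and is where I would concentrate the effort. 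Once all four bullets are established, Theorem \ref{mainlangthm} follows, and Theorem \ref{hilser} then delivers the algebraicity asserted in Theorem \ref{mainthm}.
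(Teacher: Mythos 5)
Your overall strategy matches the paper's: cite Barter for the Gr\"obner condition, encode an opposite contraction $x \to y$ as a labelled Dyck path of $y$ of length $2|E_y|$ (the paper labels each up/down step by the \emph{vertex} of $x$ to which the head/tail of the corresponding edge of $y$ is sent, rather than by the surviving edge, but the two encodings carry equivalent information since the contracted components are determined by the surviving edges), and recognize $\mathcal{L}_x$ with a PDA whose finitely many states track the current position in the fixed tree $x$ while the stack polices the balanced structure. Through the second and fourth bullets your proposal is essentially the paper's proof.

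The gap is in the third bullet, which you correctly flag as the delicate point but do not actually resolve, and the mechanism you sketch would fail as stated. First, a convention issue: the paper (following Sam--Snowden) takes an order ideal to be an \emph{up-set}, which by Noetherianity of $|\mathcal{PT}_T|$ is a finite union of principal up-sets $(\phi_i) = \{\phi'' \circ \phi_i\}$; you instead treat $I$ as downward-closed and propose to detect non-membership in its upward-generated complement. Since unambiguous context-free languages are not known to be closed under complementation, that reduction is not free and you would need to argue it separately. Second, and more substantively: detecting that a word $w_{\phi'}$ encodes a morphism factoring through $\phi_i$ amounts to detecting that $w_{\phi_i}$ is \emph{strongly contained} in $w_{\phi'}$, meaning it occurs as a subword in which matched $u/d$ pairs of the pattern are also matched pairs of the ambient word. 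This cannot be enforced by ``bounded bookkeeping in the states'' alone: when the automaton descends past the nesting level at which a partial pattern match was begun, it must abandon or roll back that partial match, and the information needed to do so depends on the (unbounded) nesting depth. The paper's construction therefore enlarges the \emph{stack} alphabet to include symbols recording how much of the pattern had been matched at the moment each unmatched up-step was pushed, so that pops tell the automaton which partial match to revert to; the states only record the current section of the word and the current progress. Identifying ``factoring through $\phi_i$'' with strong containment, and pushing the pattern-progress data onto the stack rather than into the states, are the two ideas your proposal is missing.
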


\begin{remark}
We note that, with the norm defined as it is above, the associated Hilbert series are not exactly the previously defined Hilbert series (Definition \ref{hildef}). However, they are related by substituting $t$ for $\sqrt{t}$. This operation clearly preserves the ultimate conclusion that the Hilbert series are algebraic, and we therefore stick with the aforementioned norm so that Lemma \ref{normisok} remains true.
\end{remark}

Proving this theorem happens in three steps. To begin, we must first decide on a means of encoding the morphisms of $\mathcal{PT}$ as words in a language.\\

\emph{For the remainder of this section, we fix a planar rooted tree $T$ with $n$ vertices.}\\

\begin{definition}
We may assume that the vertices of $T$ have been identified with $\{0,\ldots,n-1\}$. Then we define the alphabet $\Sigma_T$ to be the finite set of symbols
\[
\Sigma_T := \{u_i,d_i \mid i \in \{0,\ldots,n-1\}\}.
\]
Thus, $|\Sigma_T| = 2n$. We will encode $|\mathcal{PT}_T|$ as a language over the alphabet $\Sigma_T$.

Let $T'$ be a planar rooted tree, and let $\phi:T \rightarrow T'$ be an opposite planar contraction, with associated contraction $\psi:T' \rightarrow T$. Then we encode $\phi$ as a word in $\Sigma_T$ as follows. Let $w$ be the Dyck path associated to the tree $T'$. We add subscripts to the $u$'s and $d$'s in this path by looking at $\psi$ applied to the head (directed, as always, away from the root) of the associated directed edge when a $u$ is read, and the tail of the associated edge when a $d$ is read. We will write $w_\phi$ to denote this word.

Finally, we write $\mathcal{L}_T$ for the language
\[
\mathcal{L}_T := \{w_\phi \mid \phi:T \rightarrow T'\}.
\]

\end{definition}

\begin{example}
To see an example of the above encoding, let $T'$ be the planar tree pictured in Figure \ref{treefig}. Then the Dyck path associated to $T'$ is given by
\[
uuuudduuuudduudddddd.
\]
Assume now that $T$ is a single edge, with root and head labeled by 0 and 1, respectively, and let $\psi:T' \rightarrow T$ be the planar contraction which sends the vertices labeled 9 and 10 to the head of $T$, and all other vertices to the root of $T$. Then,
\[
w_{\phi} = u_0u_0u_0u_0d_0d_0u_0u_0u_0u_0d_0d_0u_1u_1d_1d_1d_0d_0d_0d_0.
\]
\end{example}

\begin{remark} \label{biggerwords}
The idea to encode these morphisms as modified Dyck paths was first accomplished by Barter in \cite{Barter}, where they were called Catalan words. Our encoding is different from his, but the basic premise is the same.\\

Also note that if $\zeta:V(T') \rightarrow V(T)$ is any function of sets (not necessarily a planar contraction), then one can similarly make sense of a word on the alphabet $\{u_i,d_i\}$ corresponding to $\zeta$. We will use this observation during the proof of the main theorem.
\end{remark}

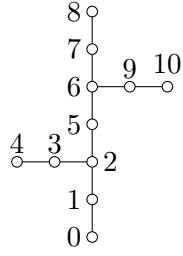
\begin{figure}
\centering
\begin{tikzpicture}
    \tikzstyle{every node}=[draw,circle,fill=white,minimum size=4pt,
                            inner sep=0pt]
    \draw (0,0) node (0) [label=above:$4$] {}
		      --(.5,0) node (1) [label=above:$3$]{}
					--(1,0) node (2) [label=right:$2$]{}
					--(1,.5) node (3) [label=left:$5$]{}
					--(1,1) node (4) [label=left:$6$]{}
					--(1,1.5) node (5) [label=left:$7$]{}
					--(1,2) node (6) [label=left:$8$]{}
    			(4) -- (1.5,1) node (7) [label=above:$9$]{}
					--(2,1) node (8) [label=above:$10$]{}
					(2) -- (1,-.5) node (9) [label=left:$1$]{}
					--(1,-1) node (10) [label=left: $0$]{};

\end{tikzpicture}

\caption{A planar tree $T'$}\label{treefig}
\end{figure}

It was already proven in \cite{Barter} that $\mathcal{PT}$ is Gr\"obner. This resolves the first condition in Definition \ref{contextfreecat}. We will now prove that $\mathcal{L}_T$ is always an unambiguous context-free language, thus verifying the second condition of Definition \ref{contextfreecat}.

\begin{proposition}\label{almostUCF}
The language $\mathcal{L}_T$ is an unambiguous context-free language.
\end{proposition}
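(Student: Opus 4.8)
The plan is to exhibit an explicit push-down automaton $P$ with $\mathcal{L}(P)=\mathcal{L}_T$ and to check that it is unambiguous, in direct analogy with Example \ref{cataex} but with the stack now carrying the contraction data. The guiding picture is that a planar contraction $\psi\colon T'\to T$ realizes $T'$ as a planar ``blow-up'' of $T$: each vertex $v$ of $T$ is replaced by the subtree $B_v:=\psi^{-1}(v)\subseteq T'$, and for each edge $\{p,c\}$ of $T$ (with $p$ the parent) there is a unique edge of $T'$ running from $B_p$ to the root-closest vertex of $B_c$. Reading $w_\phi$ from left to right is exactly a depth-first search of $T'$, and the subscript on each symbol records which block $B_v$ the traversed vertex lies in. The first step is to make this dictionary precise: I would show that a subscripted Dyck word $w$ lies in $\mathcal{L}_T$ if and only if the induced labeling $\psi\colon V(T')\to\{0,\dots,n-1\}$—sending the root to $0$ and each other vertex $c$ to the subscript carried by the $u$-symbol of the edge whose head is $c$—is a planar contraction onto $T$.

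Next I would translate the four defining properties of a planar contraction (Definition \ref{contraction}) into local conditions a stack machine can verify during this traversal. The automaton keeps the depth-first path of blocks on its stack, so that the top symbol is the block $v$ of the current vertex, and carries a single counter $m\in\{0,1,\dots,n\}$ in its state recording how many blocks have so far been entered. On reading $u_i$ with $v$ on top: if $i=v$ the move is an internal edge of $B_v$ and we simply push $i$; if $i\ne v$ we are crossing into a fresh block, which is legal exactly when $i=m$ and $v=\mathrm{parent}_T(i)$, in which case we push $i$ and advance the counter to $m+1$. Reading $d_i$ is a forced pop (the symbol on top is determined by $i$ through the encoding). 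A terminal $\epsilon$-move to the accepting state is permitted only when the stack has returned to its bottom marker and $m=n$. The crucial point is that surjectivity, the unique-edge condition, the subtree (connectedness) condition, and the order-preservation condition all become consequences of these rules: connectedness holds because each block is entered exactly once, its entry vertex being the unique one whose parent lies in another block; the unique-edge condition holds for the same reason; surjectivity is precisely $m=n$ at the end; and—this is the key simplification—order preservation is equivalent to the blocks being entered in the order $0,1,\dots,n-1$, which is exactly what the global counter enforces, since the labels of $T$ are its own depth-first labels.

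The main obstacle, and the heart of the argument, is verifying this equivalence and, more generally, that $P$ accepts $w$ if and only if $\psi$ is a genuine planar contraction. For the forward direction I would take an arbitrary planar contraction and check that its depth-first encoding never violates a transition rule; for the reverse I would take an accepting run, reconstruct the blocks $B_v$ as the fibers of $\psi$, and show directly that they are subtrees meeting the adjacency pattern of $T$. The subtlety is that the counter is a single global quantity, whereas connectedness and the unique-edge condition are per-block; this reconciles because a depth-first search traverses each subtree of $T'$ in one contiguous interval, so once the traversal leaves a block $B_v$ toward the root it never returns. Consequently every child block of $v$ must be entered exactly once during the sojourn in $B_v$, and in the order dictated by $T$'s depth-first labeling, which is precisely the sequencing the counter produces.

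Finally I would record unambiguity. Every non-$\epsilon$ transition is completely determined by the current input letter together with the top stack symbol—the cases $i=v$ and $i\ne v$ are mutually exclusive, and $d_i$ forces the popped symbol—so the automaton is deterministic away from the terminal $\epsilon$-move, and that move can only be taken productively when the input is exhausted, since the accepting state has no outgoing transitions. Hence each word of $\mathcal{L}_T$ admits exactly one accepting sequence of moves, and $\mathcal{L}_T=\mathcal{L}(P)$ is an unambiguous context-free language, as required. As a sanity check, when $T$ is a single vertex this construction collapses to the Dyck-path automaton of Example \ref{cataex}.
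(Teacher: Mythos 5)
Your construction is correct, and at the level of the machine it is essentially the paper's: both proofs exhibit an explicit deterministic PDA that simulates the depth-first traversal of $T'$, keeping the fiber (``block'') of the current vertex on top of the stack, and both deduce unambiguity from determinism. The bookkeeping differs: the paper records traversal progress through $T$ in states $q_{(e,u)},q_{(e,d)}$ indexed by an edge of $T$ and a direction, changing state via intermediate $\epsilon$-moves when the traversal backtracks across a block boundary, whereas you record a single counter $m$ counting the blocks opened so far; because the vertices of $T$ carry their own depth-first labels, these encode the same information, and your observation that order preservation is exactly ``blocks are first opened in the order $0,1,\dots,n-1$'' is what makes the single global counter suffice. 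Where you genuinely diverge is the correctness argument: the paper inducts on the number of edges of $T$, decomposing an accepted word as a shuffle of a Dyck word on $\{u_0,d_0\}$ with words from the languages of the root subtrees, and the final identification with $\mathcal{L}_T$ is asserted rather than argued; you instead verify each axiom of Definition \ref{contraction} directly against a transition rule in both directions (connectedness and the unique-edge condition because each block is opened exactly once and only from its parent block, surjectivity because $m=n$ at acceptance, order preservation via the counter), which is if anything more complete than the paper's treatment. One detail to pin down when writing the transitions explicitly: the subscript on a $d$-symbol is attached to the tail of the edge in the paper's prose (though its worked example uses the head), so under the prose convention the legality check for $d_i$ concerns the symbol \emph{below} the current top of the stack; this is handled by storing pairs (block, parent block) as stack symbols or by an intermediate state, and does not disturb determinism.
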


\begin{proof}
Our goal will be to produce an unambiguous PDA, $P_T$, whose associated language is $\mathcal{L}_T$. We define the components of this PDA in turn as follows:

\begin{itemize}
\item The states of the PDA $Q_T$ are given by the initial state $q_0$, the final state $q_f$, as well as a pair of states $q_{(e,u)}$ and $q_{(e,d)}$ for every edge $e$ of $T$.
\item The alphabet of the PDA is $\Sigma_{T}$, while the stack alphabet $\Gamma_T$ contains the initial symbol $Z$, as well as symbols $A_{v}$ for every vertex $v$ of $T$.
\end{itemize}

To finish the construction of $P_T$, we need to detail our transition relations. We accomplish this by examining a handful of cases, which condition on the state we are currently situated at.\\

\textbf{CASE: Transitions originating from the initial state $q_0$.}\\

In this case we have

\begin{eqnarray*}
\delta(q_0,u_0,\epsilon) &=& (q_0,A_0A_0)\\
\delta(q_0,d_0,A_0) &=& (q_0,\epsilon)\\
\delta(q_0,u_1,\epsilon) &=& (q_{(e_1,u)},A_1), \text{ where $e_1$ is the first edge leaving the root.}\\
\end{eqnarray*}

In other words, in this state the PDA can read either $u_0$, $d_0$, or $u_1$. while it is reading the symbols $u_0$ and $d_0$, it essentially acts as the PDA which recognizes the language of Dyck paths (see Example \ref{cataex}). If it reads the symbol $u_1$, however, it moves to the first non-initial state, while adding an $A_1$ to the top of the stack.\\

\textbf{CASE: Transitions originating from the state $q_{(e,u)}$, where $e$ is some edge of $T$ whose head (directed away from the root) is the vertex $v$.}\\

This case has two subcases. Firstly, assume that the vertex $v$ is not a leaf, and that the smallest edge leaving $v$ is $e'$, with head $v'$. In this subcase we see,

\begin{eqnarray*}
\delta(q_{(e,u)},u_v,A_v) &=& (q_{(e,u)},A_vA_v)\\
\delta(q_{(e,u)},d_v,A_v) &=& (q_{(e,u)},\epsilon)\\
\delta(q_{(e,u)},u_{v'},A_v) &=& (q_{(e',u)},A_{v'}A_v).
\end{eqnarray*}

Note that these transitions are essentially the same as in the previous case, with one somewhat subtle difference. While this state can accept the letters $u_v$ and $u_{v'}$, it can only do so if the top of the stack displays the symbol $A_v$. The reason for this is that, based on the first two transitions, it is technically possible for a sufficient number of $d_v$ symbols to be read so as to completely pop $A_v$ off the stack. If one were to then try to add either an $A_v$ or an $A_{v'}$ to the stack (e.g. by reading a $u_v$ or $u_{v'}$-respectively), the input word could not possible be coming from a contraction. Indeed, if a $u_v$ is read at this point, then the vertex map associated to the input word (see Remark \ref{biggerwords}) would have a disconnected preimage at $v$. If a $u_{v'}$ is read, then the vertex map does not preserve edge adjacency, and is therefore not a contraction either. These transitions are therefore modified to save us from accepting such a word.

In the second subcase, we assume that $e$ is a leaf, and that the tail of this leaf is $v'$. We have

\begin{eqnarray*}
\delta(q_{(e,u)},u_v,A_v) &=& (q_{(e,u)},A_vA_v)\\
\delta(q_{(e,u)},d_v,A_v) &=& (q_{(e,u)},\epsilon)\\
\delta(q_{(e,u)},\epsilon ,A_{v'}) &=& (q_{(e,d)},A_{v'}).
\end{eqnarray*}

This subcase is similar to the previous. Because we have assumed that $e$ is a leaf, there is nowhere to go but back down to $v'$. If at any point the top of the stack displays the symbol $A_{v'}$, then we have closed off all of the $u_v$ letters in our word, and must now move back down the tree $T$ to proceed with our mapping.\\

\textbf{CASE: Transitions originating from the state $q_{(e,d)}$, where $e$ is some edge of $T$ whose tail (directed away from the root) is the vertex $v$.}\\

Once again we have a few subcases. In the first subcase, we assume that $v$ is not the root, and that there is some edge $e'$, with head $v'$, which is the smallest edge outgoing from $v$ for which $A_{v'}$ has never appeared on the stack. We have

\begin{eqnarray*}
\delta(q_{(e,d)},u_v,A_v) &=& (q_{(e,d)},A_vA_v)\\
\delta(q_{(e,d)},d_v,A_v) &=& (q_{(e,d)},\epsilon)\\
\delta(q_{(e,d)},u_{v'} ,A_{v}) &=& (q_{(e',u)},A_{v'}A_v).
\end{eqnarray*}

In our second subcase, we assume that $v$ is still not the root, no such edge $e'$ exists, that $e''$ is the incoming edge of $v$, and that $v''$ is the other endpoint of $e''$. Further assume that $v''$ is also not the root. In the context of planar contractions, we will be in this case when we have already resolved how we are going to map the vertices of $T'$ to the vertices of $T$ above $v$. All that remains is to finish mapping vertices to $v$, and move back down the tree $T$. We have

\begin{eqnarray*}
\delta(q_{(e,d)},u_v,A_v) &=& (q_{(e,d)},A_vA_v)\\
\delta(q_{(e,d)},d_v,A_v) &=& (q_{(e,d)},\epsilon)\\
\delta(q_{(e,d)},\epsilon ,A_{v''}) &=& (q_{(e'',d)},A_{v''}).
\end{eqnarray*}

Repeating the previous subcase, but assuming that $v''$ is the root we have,

\begin{eqnarray*}
\delta(q_{(e,d)},u_v,A_v) &=& (q_{(e,d)},A_vA_v)\\
\delta(q_{(e,d)},d_v,A_v) &=& (q_{(e,d)},\epsilon)\\
\delta(q_{(e,d)},\epsilon ,A_{0}) &=& (q_{(e'',d)},A_{0})\\
\delta(q_{(e,d)},\epsilon ,Z) &=& (q_{(e'',d)},Z).
\end{eqnarray*}

This subcase is largely the same as the previous, with the extra caveat that $A_0$ is the only stack symbol that may never be pushed to the stack. This will happened, from the perspective of contractions, if the only vertex of $T'$ mapping to the root of $T$ is the root of $T'$. We therefore have to be a bit careful to make sure the last two cases above are written separately.

For our penultimate subcase, we assume that $v = 0$ is the root, and that $e'$ is the smallest unvisited outgoing edge with head $v'$.

\begin{eqnarray*}
\delta(q_{(e,d)},u_0,A_0) &=& (q_{(e,d)},A_0A_0)\\
\delta(q_{(e,d)},u_0,Z) &=& (q_{(e,d)},A_0Z)\\
\delta(q_{(e,d)},d_0,A_0) &=& (q_{(e,d)},\epsilon)\\
\delta(q_{(e,d)},u_{v'},\epsilon) &=& (q_{(e',u)},A_{v'})
\end{eqnarray*}

Finally, assume that $v$ is the root, and that all outgoing edges of $v$ have been visited. Then there is nothing left to be done but resolve the symbols $A_0$ and move on to the final state.

\begin{eqnarray*}
\delta(q_{(e,d)},u_0,A_0) &=& (q_{(e,d)},A_0A_0)\\
\delta(q_{(e,d)},u_0,Z) &=& (q_{(e,d)},A_0Z)\\
\delta(q_{(e,d)},d_0,A_0) &=& (q_{(e,d)},\epsilon)\\
\delta(q_{(e,d)},\epsilon , Z) &=& (q_{f},\epsilon)
\end{eqnarray*}

We observe that, given any partial input, the transition function has at most one possible move. In particular, this PDA is unambiguous. It therefore remains to prove that the language of this PDA is $\mathcal{L}_T$. We proceed by induction on the number of edges of $T$.

In the case wherein $T$ is a single point, the language $\mathcal{L}_T$ is clearly seen to be the language of Dyck paths, whereas the PDA $P_T$ is easily seen to precisely agree with the PDA of Example \ref{cataex}. Assume then that $P_{T'}$ has associated language $\mathcal{L}_{T'}$ for all trees $T'$ with $<  n$ edges for some $n \geq 1$, and let $T$ be a planar rooted tree with $n$ edges. Write the planar rooted subtrees attached to the root of $T$ as $T_1,\ldots,T_r$, ordered in the natural way. Then by induction, as well as the definition of $P_T$, we see that $\mathcal{L}(P_T)$ is the language of words $w$ such that there exists some Dyck path $e = e_1e_2\cdots e_m$ on the alphabet $\{u_0,d_0\}$, as well as words $w_{T_i} \in \mathcal{L}_{T_i}$ with
\[
w = e_1\cdots e_{i_1} w_{T_1} e_{i_1+1} \cdots e_{i_2} w_{T_2} e_{i_2+1} \cdots e_m.
\]
Note that the respective alphabets we are using for the words $w_{T_i}$ are on the symbols $\{u_j,d_j\}$ where the permitted $j$ are determined by the vertices appearing in the respective subtrees $T_i$. It is obvious that this decomposition describes the words of $\mathcal{L}_T$, as desired.
\end{proof}

\begin{example}
To make things a bit more concrete, we fully describe the PDA $P_T$ in the case wherein $T$ is the tree that looks like the letter Y, with root on the bottom leaf. In this case vertices are numbered $0,1,2$ and $3$, in depth-first fashion, while we write our edges as $e_1,e_2,e_3$. Here, the index of the edge indicates the endpoint of the edge further from the root. Then we have
\begin{itemize}
\item $Q_T = \{q_0,q_{(e_1,u)}, q_{(e_2,u)},q_{(e_2,d)},q_{(e_3,u)},q_{(e_3,d)},q_{(e_1,d)},q_f\}$
\item $\Sigma_T = \{u_0,u_1,u_2,u_3,d_0,d_1,d_2,d_3\}, \Gamma_T = \{Z,A_0,A_1,A_2,A_3\}$
\end{itemize}
The complete list of our transition rules are given as follows:
\allowdisplaybreaks
\begin{eqnarray*}
\delta(q_0,u_0,\epsilon) &=& (q_0,A_0)\\
\delta(q_0,d_0,A_0) &=& (q_0,\epsilon)\\
\delta(q_0,u_1,\epsilon) &=& (q_{(e_1,u)},A_1)\\
\delta(q_{(e_1,u)},u_1,A_1) &=& (q_{(e_1,u)},A_1A_1)\\
\delta(q_{(e_1,u)},d_1,A_1) &=& (q_{(e_1,u)},\epsilon)\\
\delta(q_{(e_1,u)},u_2,A_1) &=& (q_{(e_2,u)},A_2A_1)\\
\delta(q_{(e_2,u)},u_2,A_2) &=& (q_{(e_2,u)},A_2A_2)\\
\delta(q_{(e_2,u)},d_2,A_2) &=& (q_{(e_2,u)},\epsilon) \\
\delta(q_{(e_2,u)},\epsilon , A_1) &=& (q_{(e_2,d)},A_1)\\
\delta(q_{(e_2,d)},u_1,A_1) &=& (q_{(e_2,d)},A_1A_1)\\
\delta(q_{(e_2,d)},d_1,A_1) &=& (q_{(e_2,d)},\epsilon)\\
\delta(q_{(e_2,d)},u_3,A_1) &=& (q_{(e_3,u)},A_3A_1)\\
\delta(q_{(e_3,u)},u_3,A_3) &=& (q_{(e_3,u)},A_3A_3)\\
\delta(q_{(e_3,u)},d_3,A_3) &=& (q_{(e_3,u)},\epsilon)\\
\delta(q_{(e_3,u)},\epsilon , A_1) &=& (q_{(e_3,d)},A_1)\\
\delta(q_{(e_3,d)},u_1,A_1) &=& (q_{(e_3,d)},A_1A_1)\\
\delta(q_{(e_3,d)},d_1,A_1) &=& (q_{(e_3,d)},\epsilon)\\
\delta(q_{(e_3,d)},\epsilon , A_0) &=& (q_{(e_1,d)},A_0)\\
\delta(q_{(e_3,d)},\epsilon , Z) &=& (q_{(e_1,d)},Z)\\
\delta(q_{(e_1,d)},u_0,A_0) &=& (q_{(e_1,d)},A_0A_0)\\
\delta(q_{(e_1,d)},u_0,Z) &=& (q_{(e_1,d)},A_0Z)\\
\delta(q_{(e_1,d)},\epsilon,Z) &=& (q_f,\epsilon).
\end{eqnarray*}

In words: in the initial state, the PDA can process three letters: $u_0,d_0$ and $u_1$. In the first case, the symbol $A_0$ is pushed onto the stack, while in the second case $A_0$ is popped from the stack. In the third case, we jump to the first non-initial state, and push the symbol $A_1$ onto the stack. To relate this to the context of planar contractions, we known that the root of $T'$ must map to the root of $T$. At this point we traverse $T'$ using the usual Dyck path method, at each step keeping track of what vertex of $T$ the contraction is mapping our current vertex of $T'$ to. In particular, at the beginning we map everything to the root of $T'$, until we step to the first vertex of $T'$ which maps to the first non-root vertex of $T$ (in the depth-first order). At this point, we have entered the regime of the word $w_\phi$ where the symbols $u_1$ and $d_1$ become active, while $u_0$ and $d_0$ become inactive. This will remain the case until we close off all of the symbols $u_1$ (as well as any intermediate symbols corresponding to the vertices of $T$ accessible from the vertex 1 without passing through the root). That is to say, until the top of the stack is either the symbol $A_0$ or $Z$. Here our PDA will have the option to move into the section of the word corresponding to a region of $T'$ which is once more being sent to the root. In this region, while we are free to use the symbols $u_0$ and $d_0$, we have to be careful not to suddenly begin reusing the symbol $u_1$. Indeed, once we have stepped back to the root in $T$, it would be a violation of the definition of contraction to return to the first vertex. This is why our states not only record which vertex of $T$ we are currently mapping to, but also whether we have just entered this regime from below, or above.
\end{example}

The construction of the PDA in Proposition \ref{almostUCF} inspires the following definition.

\begin{definition}
Let $w_{\phi}$ be a word in $\mathcal{L}_T$, and let $(e,u)$ (resp. $(e,d)$) be a pair of an edge and a direction corresponding to a state in the PDA of Proposition \ref{almostUCF}. Then the letters appearing in the word $w_{\phi}$ which are processed by this PDA whilst in the state corresponding to $(e,u)$ (resp. $(e,d)$) comprise what we call the \textbf{$(e,u)$ (resp. $(e,d)$) section of the word $w_{\phi}$}. The portion of $w_{\phi}$ which is parsed in the $q_0$-state of the PDA will be called the \textbf{initial section}. When the specific state of the PDA is not relevant to what is being discussed, we will often times just refer to the \textbf{sections of the word $w_{\phi}$}.
\end{definition}

\begin{example}
If we take $w_\phi = u_0u_0d_0u_1u_1d_1d_1d_0$, then the initial section of $w_\phi$ is the subword $u_0u_0d_0u_1$, while the $(e,u)$-section is $u_1d_1d_1$, and the $(e,d)$-section is $d_0$.
\end{example}

In accordance with Definition \ref{contextfreecat}, we have to verify that the order ideals of the poset $|\mathcal{PT}^{op}_T|$ are also unambiguous context-free languages, as well as the condition that our norm agrees with the length function on the language. The latter of these two goals is immediate from the relevant definitions.

\begin{lemma}\label{normisok}
The norm of $\nu$ defined in the statement of Theorem \ref{mainlangthm} respects the length on $\mathcal{L}_T$.
\end{lemma}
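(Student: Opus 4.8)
The plan is to unwind the definitions and observe that the encoding $\iota_T$ does not alter the length of the underlying Dyck path. Recall that a morphism $\phi:T \rightarrow T'$ in $\mathcal{PT}^{op}$ is encoded as the word $w_\phi \in \mathcal{L}_T$, which is produced from the Dyck path $w$ associated to the \emph{target} tree $T'$ by attaching a subscript to each letter of $w$ (the subscript recording the image, under the associated contraction $\psi:T' \rightarrow T$, of the relevant endpoint of the edge being traversed). Since this decoration is carried out letter by letter---each $u$ or $d$ of $w$ becomes exactly one $u_i$ or $d_i$ of $w_\phi$---the assignment $w \mapsto w_\phi$ preserves length, so $l(w_\phi) = l(w)$.

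First I would record that, by Definition \ref{hildef}, the Dyck path associated to a planar rooted tree with $m$ edges has length $2m$. Applying this to $T'$ gives $l(w) = 2|E_{T'}|$, and therefore $l(\iota_T(\phi)) = l(w_\phi) = 2|E_{T'}|$.

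Comparing against the norm $\nu(T') = 2\cdot |E_{T'}|$ from the statement of Theorem \ref{mainlangthm}, we obtain $l(\iota_T(\phi)) = \nu(T')$. This is precisely the fourth condition of Definition \ref{contextfreecat} (taking $x = T$, $y = T'$, and $f = \phi$), which is exactly what the lemma asserts.

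The only point requiring care---and the reason the norm was defined with a factor of $2$ rather than as the raw number of edges---is to track which tree's edges are being counted: the length of $w_\phi$ measures the \emph{target} $T'$, not the fixed source $T$, and the factor of $2$ reflects that a Dyck path traverses each edge twice, once on the way up and once on the way down. There is no genuine obstacle here; the content of the lemma is just this bookkeeping, which is why it can be dispatched immediately from the definitions.
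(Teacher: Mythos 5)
Your proof is correct and is exactly the definition-unwinding the paper has in mind: the paper offers no separate argument, simply asserting the lemma is ``immediate from the relevant definitions,'' and your observation that $w_\phi$ decorates the Dyck path of the target $T'$ letter-for-letter, so $l(w_\phi) = 2|E_{T'}| = \nu(T')$, is precisely that immediate argument spelled out.
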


Before we can begin the proof of our final required statement,we introduce some notation that will be useful.

\begin{definition}
Let $w_\phi,w_{\phi'} \in \mathcal{L}_T$ be two words. We say that $w_\phi$ is \textbf{strongly contained in $w_{\phi'}$} if $w_\phi$ is a subword of $w_{\phi'}$, and whenever $u_i, d_i$ are a pair in $w_{\phi}$ - that is this $d_i$ is the alphabet symbol whose reading pops the original contribution of $u_i$ from the stack of $P_T$ - they are also a pair in $w_{\phi'}$. For instance, while $uudd$ appears as a subword of $udududud$, it is not strongly contained in this word.
\end{definition}

\begin{remark}
Counting patterns in Dyck paths is a relatively new field which seems to have many results analogous to the much more classical setting of counting patterns in permutations. See \cite{BBFGPW} for a treatment of these results. In this paper, we will be concerned with patterns that strongly appear in the word, as in the above definition. Our goal will be to show that the language of Dyck paths strongly containing any fixed pattern is actually unambiguous and context-free.
\end{remark}

\begin{proposition}
Let $T$ be a fixed planar tree, and let $I$ be an order ideal of $|\mathcal{PT}_T^{op}|$. Then the language associated to $I$ is unambiguous and context-free.
\end{proposition}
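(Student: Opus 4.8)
The plan is to translate the partial order on $|\mathcal{PT}_T^{op}|$ into the combinatorics of strong containment, and then to realize each order ideal as a pattern-avoidance language recognized by a single unambiguous PDA built on top of $P_T$. First I would establish the dictionary between the two sides. If $f\colon T\to T'$ and $g\colon T\to T''$ represent elements of $|\mathcal{PT}_T^{op}|$, then $f\le g$ means $g = h\circ f$ for some morphism $h$, which on the level of contractions says that $\psi_g\colon T''\to T$ factors as $T''\xrightarrow{\psi_h}T'\xrightarrow{\psi_f}T$; that is, $T'$ is itself a contraction of $T''$ over $T$. Since a contraction deletes edges, and an edge of a planar tree corresponds to a matched $u_i\cdots d_i$ pair of its Dyck encoding, passing from $w_g$ to $w_f$ amounts to deleting exactly the matched pairs of the contracted edges while the surviving pairs retain their subscripts (because $\psi_g=\psi_f\circ\psi_h$ sends them to the same vertex of $T$). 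Comparing with the definition of strong containment, this says precisely that $f\le g$ if and only if $w_f$ is strongly contained in $w_g$. Hence an order ideal $I$ is exactly a subset of $\mathcal{L}_T$ that is downward closed under strong containment.

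Next I would invoke the Gröbner property of $\mathcal{PT}$ established by Barter, which makes $|\mathcal{PT}_T^{op}|$ Noetherian in the sense of Sam and Snowden \cite{sam}, so that its complementary filter $|\mathcal{PT}_T^{op}|\setminus I$ is generated by finitely many minimal elements $g_1,\dots,g_k$. Writing $p_i=w_{g_i}$, the dictionary of the previous paragraph yields the clean description
\[
\iota_T(I) = \{\, w\in\mathcal{L}_T \mid w \text{ strongly contains no } p_i \,\}.
\]
Thus the proposition reduces to the assertion flagged in the preceding remark: the sublanguage of $\mathcal{L}_T$ avoiding a finite list of strong patterns is unambiguous and context-free.

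To produce such a language I would construct a single PDA $P_T^{I}$ by augmenting $P_T$. Its states are pairs consisting of a state of $P_T$ together with a finite \emph{progress register} recording, for each $p_i$, how far a strong occurrence of $p_i$ has been matched in the prefix read so far, with the whole word rejected the instant any $p_i$ is completed. Because strong containment constrains the matched-pair structure, and the matched pairs of a word in $\mathcal{L}_T$ are exactly those recorded on the stack of $P_T$, I would thread the pattern-tracking through the stack by annotating each stack symbol $A_v$ with the relevant progress data, so that opening and closing a matched pair updates the pattern state in lockstep with the Dyck bookkeeping $P_T$ already performs. Determinism of $P_T$ (at most one legal move per configuration) is inherited by the augmented machine, keeping it unambiguous, and the identity $\mathcal{L}(P_T^{I})=\iota_T(I)$ can then be verified by the same induction on the number of edges of $T$ used in Proposition \ref{almostUCF}.

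The main obstacle is that the class of unambiguous context-free languages is closed neither under complement nor under intersection, so the statement cannot be assembled by Boolean operations from the individual per-pattern languages; the construction must be carried out inside one automaton. The delicate point is that avoidance is an existential-over-embeddings condition, and a naive tracker would have to \emph{guess} an occurrence, destroying both determinism and unambiguity. What rescues the argument is that $T$ is fixed and finite: there are only finitely many vertex labels and hence only a bounded amount of progress data to carry, and strong containment is rigid enough that at each matched-pair level the set of partial occurrences of the $p_i$ is finite and can be maintained deterministically by the annotated stack symbols rather than guessed. Verifying that this bounded, stack-coordinated bookkeeping faithfully characterizes avoidance, and that the resulting machine accepts each word by a unique run, is the heart of the proof.
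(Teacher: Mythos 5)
Your overall strategy --- translate the order relation on $|\mathcal{PT}_T^{op}|$ into strong containment of labelled Dyck words, reduce to finitely many patterns via Noetherianity of the poset, and then track pattern progress inside an augmented version of $P_T$ whose states and stack symbols record how much of each pattern has been matched --- is exactly the strategy of the paper's proof. The paper likewise treats the principal case first (states recording both the current section of the input word and the progress into $w_\phi$, stack symbols given by the prefixes of $w_\phi$) and then handles a general ideal by running finitely many such trackers in parallel.

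There is, however, one genuine discrepancy. In this paper ``order ideal'' is used in the Sam--Snowden sense: an \emph{upward} closed subset of $|\mathcal{PT}_T^{op}|$. The principal ideal is $(\phi)=\{\phi'\mid \phi'=\phi''\circ\phi\}$, Noetherianity gives $I=\bigcup_{i=1}^N(\phi_i)$, and hence $\iota_T(I)$ is the language of words that strongly \emph{contain} at least one of the $w_{\phi_i}$. You have read ``order ideal'' as a downward closed set, passed to the complementary filter, and reduced to the language of words that strongly \emph{avoid} every $p_i$. These are complementary sublanguages of $\mathcal{L}_T$, and --- as you yourself observe --- unambiguous context-free languages are not closed under complement, so this is not a cosmetic difference: as written, you are proving a statement about the wrong set. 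The situation is recoverable, since your tracker (like the paper's) is deterministic and deterministic context-free languages are closed under complementation relative to $\mathcal{L}_T$; but that bridge is never built in your argument, and the cleaner fix is simply to accept upon \emph{completing} a pattern (entering a ``found'' state, after which the machine only needs to verify membership in $\mathcal{L}_T$) rather than upon avoiding all of them, which is what the paper does. With that change your construction essentially coincides with the paper's; the remaining delicate point, which both you and the paper address only through a worked example, is the deterministic ``fall back'' bookkeeping needed when a tentative new occurrence of a pattern fails and the machine must resume an older partial match, which the paper implements by pushing the partially matched prefix onto the stack so that popping it signals the failure.
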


\begin{proof}
We first prove the proposition in the case where the order ideal is principal. In particular, we assume that
\[
I = (\phi) = \{\phi' \mid \phi' = \phi'' \circ \phi \text{ for some $\phi''$.}\}
\]
Translating everything through various definitions and equivalences, our goal in this proposition is to prove the following. We must show that the language
\[
\{w_{\phi'} \mid w_{\phi'} \text{ strongly contains $w_{\phi}$}\} \subseteq \mathcal{L}_T,
\]
is unambiguous and context-free.

Consider the PDA with component parts given by
\begin{itemize}
\item $Q = \{q^i_{0},q^{i_{(e,u)}}_{(e,u)},q^{i_{(e,d)}}_{(e,d)}, q_{found}, q_{f} \}$
\item $\Gamma = \{Z, (w_{\phi,j})\}$,
\end{itemize}
where $i$ ranges from 0 to the size of the initial section of $w_\phi$, and each $i_{(e,u)}$ or $i_{(e,d)}$ range from 0 to the length of the $(e,u)$ or $(e,d)$ section of the word $w_\phi$, respectively.

In words, the states of the PDA will encode both the section of the word we are currently parsing, as well as how much of $w_{\phi}$ we have observed thus-far. Our stack symbols include the initial symbol $Z$ as well as symbols corresponding to the subwords of $w_{\phi}$ comprised of the first $j$ letters for each $0 \leq j \leq l(w_{\phi})$.

We describe the transitions of this PDA in a particular example, and then discuss how they generalize. Let $T$ be a single edge, and $w_{\phi} = u_0d_0u_1u_1d_1d_1$. In this case our PDA has states given by 
\[
Q = \{q_{0}^0,q_{0}^1,q_{0}^2,q_{(e,u)}^0,q_{(e,u)}^1,q_{(e,u)}^2,q_{(e,u)}^3, q_{(e,d)}^0,q_{found},q_{f}\},
\]
while our stack alphabet is given by
\[
\Gamma = \{Z,(U_0),(U_0D_0),(U_0D_0U_1),(U_0D_0U_1U_1),(U_0D_0U_1U_1D_1),(U_0D_0U_1U_1D_1D_1))\}
\]
Our transition function is given as follows
\allowdisplaybreaks
\begin{eqnarray}
\delta(q_{0}^0,u_0,Z) &=& (q_{0}^1,(U_0)Z) \notag\\
\delta(q_{0}^1,u_0,\epsilon) &=& (q_{0}^1,(U_0))\notag \\
\delta(q_{0}^1,d_0,(U_0)) &=& (q_{0}^2,\epsilon)\notag\\
\delta(q_{0}^2,u_0,\epsilon) &=& (q_{0}^2,(U_0D_0))\notag\\
\delta(q_{0}^2,d_0,(U_0)) &=& (q_{0}^2,\epsilon)\notag\\
\delta(q_{0}^2,d_0,(U_0D_0)) &=& (q_{0}^2,\epsilon)\notag\\
\delta(q_{0}^2,u_1,\epsilon) &=& (q_{(e,u)}^0,(U_0D_0U_1))\notag\\
\delta(q_{(e,u)}^0,u_1,\epsilon) &=& (q_{(e,u)}^1,(U_0D_0U_1U_1))\notag\\
\delta(q_{(e,u)}^1,u_1,\epsilon) &=& (q_{(e,u)}^1,(U_0D_0U_1U_1))\notag\\
\delta(q_{(e,u)}^1,d_1,(U_0D_0U_1U_1)) &=& (q_{(e,u)}^2,\epsilon)\notag\\
\delta(q_{(e,u)}^2,u_1,\epsilon) &=& (q_{(e,u)}^2,(U_0D_0U_1U_1D_1)) \label{strangemove}\\ 
\delta(q_{(e,u)}^0,d_1,(U_0D_0U_1U_1D_1)) &=& (q_{(e,u)}^2,\epsilon) \label{strangemove2}\\
\delta(q_{(e,u)}^2,d_1,(U_0D_0U_1U_1)) &=& (q_{(e,u)}^3,\epsilon)\notag\\
\delta(q_{(e,u)}^2,d_1,(U_0D_0U_1)) &=& (q_{(e,d)}^0,\epsilon)\notag\\
\delta(q_{(e,u)}^3,u_1,\epsilon)) &=& (q_{(e,u)}^3,(U_0D_0U_1U_1D_1D_1))\notag\\
\delta(q_{(e,u)}^3,d_1,(U_0D_0U_1U_1D_1D_1)) &=& (q_{(e,u)}^3,\epsilon)\notag\\
\delta(q_{(e,u)}^3,d_1,(U_0D_0U_1U_1)) &=& (q_{(e,u)}^3,\epsilon)\notag\\
\delta(q_{(e,u)}^3,d_1,(U_0D_0U_1U_1D_1)) &=& (q_{(e,u)}^3,\epsilon)\notag\\
\delta(q_{(e,u)}^3,d_1,(U_0D_0U_1)) &=& (q_{(e,u)}^3,\epsilon)\notag\\
\delta(q_{(e,d)}^0,\epsilon,\epsilon) &=& \delta(q_{found},\epsilon,\epsilon)\notag\\
\delta(q_{found},u_0,\epsilon) &=& (q_{found},(U_0D_0U_1U_1D_1D_1))\notag\\
\delta(q_{found},d_0,(U_0D_0U_1U_1D_1D_1)) &=& (q_{found},\epsilon)\notag\\
\delta(q_{found},d_0,(U_0D_0)) &=& (q_{found},\epsilon)\notag\\
\delta(q_{found},d_0,(U_0)) &=& (q_{found},\epsilon)\notag\\
\delta(q_{found},\epsilon,Z) &=& (q_{f},\epsilon)\notag .
\end{eqnarray}

To summarize, the states of the PDA indicate both the section of $w_\phi$ which has been thus far detected, as well as the section of the input word currently being processed. We note that sections of $w_\phi$ must appear in the corresponding sections of the word being processed, and so there is nothing lost by partitioning our states this way. The stack symbols are meant to indicate how much of the word $w_\phi$ has been processed at the time the current letter is being read. This way, for instance, when certain symbols are popped from the stack, the PDA can determine when it needs to leave a certain state as the currently observed partial pattern can no longer be completed. On the other hand, this also allows for us to know whether or not a currently being read down symbol corresponds to the correct up symbol in so far as the pattern is concerned. In particular, it guarantees the copy of $w_\phi$ being detected is strongly included in the word, and not just a subword. The state $q_{found}$ is entered when the word $w_\phi$ has been fully detected, and we no longer have to worry about tracking exactly what is being read.

One notable pair of transitions is (\ref{strangemove}) and (\ref{strangemove2}). At this point in the PDA, the partial pattern $u_0d_0u_1u_1d_1$ has been detected, and the next letter read is a $u_1$. Whenever one follows a down move with an up move, and the desired pattern $w_\phi$ calls for a second down move, you have entered a section of your word which can no longer contribute to completing the partial pattern originally being observed. Therefore, the PDA must fall back to an earlier state and begin looking for a new pattern. If it finds this new pattern, then it enters the "found" state. Otherwise the symbol $(U_0D_0U_1U_1D_1)$ will eventually be popped from the stack, indicating to the PDA it must return to the original partial pattern and attempt to complete it.

Observe many states and transitions are not strictly necessary in this example. For instance, once two $u_1$ have been observed, it is impossible for us to not find our pattern. We present this example in this overly long way just to make it more clear how it generalizes. Finally, Observe that our PDA is unambiguous as for every input letter and top of the stack, there is at most one transition available.

Now that we have treated the case of a principal order ideal, we must treat the general case of an order ideal $I$. To begin, recall that Barter \cite{Barter} has already shown that the poset $|\mathcal{PT}_T|$ is Noetherian. In particular, all order ideals can be expressed as a finite union $I = \cup_{i = 1}^N (\phi_i)$. We therefore must prove
\[
\{w_{\phi} \mid w_{\phi} \text{ strongly contains at least one of the words $w_{\phi_i}$}\}
\]
is unambiguous and context-free. Because the list of desired patterns is finite, it is clear that one may modify the above so that our states record how much we have seen of each of the patterns independently. Our stack symbols will encode the currently observed partial patterns for each of the finitely many target patterns.
\end{proof}

\section{Applications of the main theorem}

In this section, we see certain concrete applications of the main Theorem \ref{mainthm}. Our first application involves counting a certain recursive invariant of Dyck words.

\begin{definition}
Given a Dyck path $w$, we define its \textbf{degree sequence} as the $(\frac{l(w)}{2}+1)-tuple$ $(\alpha^{(v)}_w)_{v \in T(w)}$ encoding the degree sequence of the associated planar rooted tree $T_p(w)$. If $\alpha_w$ is the degree sequence of some Dyck path, then we write
\[
||\alpha_w||_\ast := \sum_{v \in T(w)} \binom{\alpha_w^{(v)}}{2}
\]
for the \textbf{star-norm} of $\alpha_w$.
\end{definition}

Our first application involves the generating function of the star-norm.

\begin{theorem}\label{firstapp}
The generating function
\[
H_{star}(t) := \sum_{w} ||\alpha_w||_\ast t^{l(w)/2}
\]
is algebraic.
\end{theorem}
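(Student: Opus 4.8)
The plan is to produce a finitely generated $\mathcal{T}^{op}$-module $M_\ast$ with $\dim_\Q M_\ast(T(w)) = ||\alpha_w||_\ast$ for every Dyck path $w$, and then invoke Theorem \ref{mainthm2}. The star-norm depends only on the underlying tree, since $||\alpha_w||_\ast = \sum_{v}\binom{\deg v}{2}$ counts the unordered pairs of distinct edges of $T(w)$ meeting at a common vertex; call such a pair a \emph{corner}. I would therefore set $M_\ast(T)$ to be the $\Q$-vector space on the set of corners of $T$, so that $\dim_\Q M_\ast(T(w)) = ||\alpha_w||_\ast$ is immediate and $HD_{M_\ast}(t) = H_{star}(t)$ on the nose. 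All of the work is then in equipping these spaces with a finitely generated contravariant functor structure.

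To define the action, recall that a contraction $\varphi:T\to T'$ induces a unique edge-pullback $\varphi^\ast:E_{T'}\hookrightarrow E_T$, each edge of $T'$ having exactly one preimage. Given a corner $(v',\{e_1',e_2'\})$ of $T'$, set $e_i := \varphi^\ast(e_i')$ and let $x_i$ be the unique endpoint of $e_i$ with $\varphi(x_i)=v'$. I would define $M_\ast(\varphi)$ to send this basis corner to $(x_1,\{e_1,e_2\})$ when $x_1=x_2$, and to $0$ otherwise, extending linearly. The first step is to verify functoriality. The crucial point is that if the two relevant endpoints coincide in $T$ then they coincide in $T'$, because a contraction carries a common vertex to a common vertex; combined with $(\varphi\psi)^\ast=\psi^\ast\varphi^\ast$ on edges, this shows that for a composable pair $T\xrightarrow{\psi}T'\xrightarrow{\varphi}T''$ both $M_\ast(\psi)\circ M_\ast(\varphi)$ and $M_\ast(\varphi\psi)$ vanish on a corner of $T''$ exactly when its pulled-back edges fail to meet at a single vertex of $T$, and agree on the corner they produce otherwise.

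The second step is finite generation, and here I would show that $M_\ast$ is generated by the single tree $Y$ consisting of two edges (the cherry $P_3$), whose unique corner spans $M_\ast(Y)\cong\Q$. Given an arbitrary corner $(x,\{e_1,e_2\})$ of a tree $T$, the contraction $\varphi:T\to Y$ that keeps $e_1$ and $e_2$, sending the two components lying beyond them to the leaves of $Y$ and the remaining subtree through $x$ to the central vertex, pulls the corner of $Y$ back to precisely $(x,\{e_1,e_2\})$; thus every corner lies in the image of some $M_\ast(\varphi)$ with source $Y$. With these two steps in hand, $M_\ast$ is a finitely generated $\mathcal{T}^{op}$-module, and Theorem \ref{mainthm2} gives the algebraicity of $H_{star}(t)=HD_{M_\ast}(t)$.

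I expect the functoriality of the contravariant action to be the main obstacle, precisely because adjacency of edges is not preserved under edge-pullback: two edges sharing a vertex of $T'$ may pull back to edges of $T$ joined by a contracted path rather than by a common vertex. This is exactly why $M_\ast(\varphi)$ must send a corner to $0$ unless its preimage edges genuinely meet, and why the compatibility of this vanishing condition across compositions is the delicate point. A naive "send corner to corner" rule, or any attempt to use the covariant push-forward of corners, fails for this reason, so isolating the correct contravariant definition is where the real care is needed.
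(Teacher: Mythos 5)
Your proof is correct, but it takes a genuinely different route from the paper. The paper does not build a new module: it reuses the finitely generated $\mathcal{PT}^{op}$-module $M$ from \cite{PR-trees} with $M(T_p(w)) = H_1(\UConf_2(\cone(T_p(w));\Q))$, invokes the computation $\dim_\Q M(T_p(w)) = l(w)/2 + ||\alpha_w||_\ast$, and then extracts $H_{star}(t) = H_M(t) - \sum_w (l(w)/2)\,t^{l(w)/2}$, checking that the subtracted series is algebraic because it is $t$ times the derivative of the (explicitly algebraic) Catalan generating function. You instead construct a bespoke module of ``corners'' whose dimensions are exactly $||\alpha_w||_\ast$, and your verification is sound: the pulled-back endpoints $x_1,x_2$ satisfy $\varphi(x_1)=\varphi(x_2)$ whenever $x_1=x_2$, which is precisely what makes the vanishing loci of $M_\ast(\varphi\psi)$ and $M_\ast(\psi)\circ M_\ast(\varphi)$ coincide, and the contraction of $T$ onto the cherry $Y$ that you describe does hit every corner, so $M_\ast$ is generated by $Y$ alone. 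Your argument buys self-containedness --- it avoids citing the configuration-space homology formula and the derivative/D-finiteness step, and it exhibits $H_{star}(t)$ as an honest Hilbert--Dyck series of a finitely generated module rather than as a difference of two algebraic functions --- at the cost of having to check functoriality and finite generation by hand, which the paper outsources to \cite{PR-trees}. Two small points worth making explicit: the identification $\sum_v \binom{\alpha_w^{(v)}}{2} = \#\{\text{corners of } T(w)\}$ uses that the degree sequence of $T_p(w)$ agrees with that of its underlying tree, so the summand indeed depends only on $T(w)$ and $H_{star}(t) = HD_{M_\ast}(t)$ as you claim; and since your construction is equally functorial on $\mathcal{PT}^{op}$, you could just as well pull back along the forgetful functor and quote Theorem \ref{mainthm} directly.
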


\begin{proof}
We will encode the series $H_{star}(t)$ as the Hilbert series of some finitely generated $\mathcal{PT}^{op}$-module. Let $T$ be a tree, and let $\cone(T)$ denote the graph obtained by adding a single vertex and connecting it to every vertex of $T$. For instance, the cone of a single edge is a triangle, while the cone of a path with three vertices is two triangles glued along an edge. In \cite{PR-trees}, a finitely generated $\mathcal{PT}^{op}$-module $M$ is constructed with the property that for any Dyck path $w$,
\[
M(T_p(w)) = H_1(\UConf_2(\cone(T_p(w));\Q)),
\]
where $\UConf_2(\cone(T_p(w)))$ is the two particle unordered configuration space of the graph $\cone(T_p(w))$ (see \cite{ADK}\cite{Ramos-Graph}\cite{Farley}, for instance). It is also shown in \cite{PR-trees} that
\[
\dim_\Q(H_1(\UConf_2(\cone(T_p(w));\Q))) = l(w)/2 + ||\alpha_w||_\ast.
\]
Therefore, the Hilbert series of the module $M$ is given by
\[
H_M(t) = \sum_w (l(w)/2 + ||\alpha_w||_\ast) t^{l(w)/2} = \sum_w (l(w)/2)t^{l(w)/2} + H_{star}(t).
\]
On the other hand, one can see that 
\[
\sum_w (l(w)/2)t^{l(w)/2} = t \cdot \frac{\partial}{\partial t}(\sum_w t^{l(w)/2}).
\]
It is classically known that
\[
\sum_w t^{l(w)/2} = \frac{1- \sqrt{1-4t}}{2t},
\]
whence
\[
\frac{\partial}{\partial t}(\sum_w t^{l(w)/2}) = \frac{(-2 t - \sqrt{1 - 4 t} + 1)}{(2 t^2\sqrt{1 - 4 t})}
\]
is an algebraic function. It follows that $\sum_w (l(w)/2)t^{l(w)/2}$ is algebraic, and the same must be true of $H_{star}(t)$.
\end{proof}

\begin{remark}
The observation that the derivative of $\sum_{w} (l(w)/2) t^{l(w)/2}$ is once again algebraic is really a specific case of a much more general phenomenon related to D-finite series. See \cite{BD} for more on this.
\end{remark}

Note that the above Theorem is actually just applying the techniques of this paper to the case of the first homology of tree configuration spaces. In fact, the results of \cite{PR-trees} tell us that all of the homology groups are finitely generated as $\mathcal{PT}^{op}$-modules. In particular, the same proof technique as the above can be used to prove a variety of more complicated numerical invariants of degree sequences of Dyck paths have algebraic generating functions. See \cite{Ramos-Graph} for what these formulas look like.

For our second application, we look to counting subtrees of a given tree. The generating function for counting subtrees is of considerable interest in computer science (see \cite{Rus}, For instance). In this work we consider the generating function of the following collections of numbers.

\begin{definition}
Let $w$ be a Dyck path with associated planar rooted tree $T_p(w)$, and let $l \geq 2$ be fixed. Then we set $s_l(w)$ to be the invariant
\[
s_l(w) = \begin{cases} \# (\text{planar rooted subtrees of $T_p(w)$}) & \text{ if $T_p(w)$ has no more than $l$ leaves}\\ 0 &\text{ otherwise.}\end{cases}
\]
\end{definition}

\begin{example}
For instance, if $l = 2$, then those $w$ for which $s_l(w) \neq 0$ precisely correspond to planar rooted paths. If $T_p(w)$ is a planar rooted path, then each subtree, that is not a single vertex, is in bijection with unordered pairs of vertices of $T_p(w)$. In particular,
\[
s_2(w) = \begin{cases} (l(w)/2+1) + \binom{l(w)/2+1}{2} &\text{ if $T_p(w)$ is a path}\\ 0 &\text{ otherwise}\end{cases} = \begin{cases} \binom{l(w)/2+2}{2} &\text{ if $T_p(w)$ is a path}\\ 0 &\text{ otherwise.}\end{cases}
\]
Thus,
\[
\sum_w s_2(w)t^{l(w)/2} = 1 + \sum_{n \geq 1} n \binom{n+2}{2}t^{n}
\]
where the factor of $n$ is the number of Dyck paths corresponding to paths with $n\geq 1$ edges, and the plus one comes from the case of the empty path corresponding to $T_p(w)$ being a single point. This implies that the generating function for $s_2(w)$ is rational. In general, we will see that $s_l(w)$ has an algebraic generating function.
\end{example}

\begin{theorem}
Let $l \geq 2$ be fixed. Then the generating function
\[
H_l(t) := \sum_w s_l(w)t^{l(w)/2}
\]
is algebraic.
\end{theorem}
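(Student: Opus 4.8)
The plan is to mirror the proof of Theorem \ref{firstapp}: I will exhibit a finitely generated $\mathcal{PT}^{op}$-module $M$ whose Hilbert series equals $H_l(t)$, so that algebraicity follows immediately from Theorem \ref{mainthm}. For a planar rooted tree $T$, let $M(T)$ be the $\Q$-vector space with basis the set of subtrees of $T$ (equivalently, the connected subgraphs, each carrying its induced planar rooted structure) whenever $T$ has at most $l$ leaves, and set $M(T)=0$ otherwise. Since a planar rooted subtree is the same datum as a connected subgraph, the example computation for $l=2$ shows that $\dim_\Q M(T_p(w)) = s_l(w)$; hence $H_M(t) = H_l(t)$ once $M$ is shown to be a finitely generated module.

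To make $M$ a contravariant functor, given a planar contraction $\varphi:T\to T'$ I send a subtree $S'\subseteq T'$ to its preimage $\varphi^{-1}(S')\subseteq T$, consisting of all vertices and edges mapping into $S'$. Because the fibres of a contraction are subtrees of $T$ and $S'$ is connected, $\varphi^{-1}(S')$ is again a subtree, and the identity $(\psi\circ\varphi)^{-1}=\varphi^{-1}\circ\psi^{-1}$ on preimages gives contravariant functoriality. The one point to verify is compatibility with the leaf truncation, and for this I record the key lemma that the number of leaves never increases under a contraction: if $v'$ is a leaf of $T'$, then $\varphi^{-1}(v')$ is a subtree of $T$ attached to the rest of $T$ by a single edge, so it contains at least one leaf of $T$, and distinct leaves of $T'$ have disjoint fibres. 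Consequently the truncation is consistent: if $T$ has more than $l$ leaves then $M(T)=0$ and the transition map is the zero map, while if $T$ has at most $l$ leaves then $T'$ does too, and the pullback of subtrees is defined on both $M(T')$ and $M(T)$.

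The crux is finite generation. Given any $T$ with at most $l$ leaves and any subtree $S\subseteq T$, I claim the pair $(T,S)$ is a pullback of a pair living on a planar tree whose size is bounded in terms of $l$ alone. Concretely, take the contraction $\varphi:T\to T_S$ that collapses every interior edge of $S$ to a single vertex $p$, leaves every boundary edge of $S$ intact, and contracts the remaining edges so as to suppress every degree-two vertex outside $S$. Then $\varphi^{-1}(\{p\})=S$ exactly, so $S$ is the pullback of the single-vertex subtree $\{p\}\in M(T_S)$. The leaf lemma bounds the construction: each connected component of $T\setminus S$ is attached to $S$ by a single edge and hence contains a leaf of $T$, so $p$ has degree at most $l$ in $T_S$; moreover the whole complement has at most $l$ leaves, so after suppressing degree-two vertices it reduces to a topological forest with boundedly many edges. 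Thus $T_S$ has a number of edges bounded in terms of $l$ alone. Since there are only finitely many planar rooted trees of bounded size, taking $\{T_i\}$ to be all planar rooted trees with at most this many edges exhibits a finite generating set in the sense of Definition \ref{fgdef}. Therefore $M$ is finitely generated, $H_M(t)=H_l(t)$, and Theorem \ref{mainthm} completes the proof.

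I expect the main obstacle to be this finite-generation step, and in particular checking that the collapsing map $\varphi$ can be arranged to be a genuine \emph{planar} contraction -- tracking the root and the depth-first orderings through the collapse of $S$ and the suppression of degree-two vertices -- while keeping the target $T_S$ of size bounded independently of $T$. The leaf restriction is exactly what forces the degree of $p$, and hence the size of $T_S$, to stay bounded; without it (for instance on stars with arbitrarily many leaves) the number of subtrees grows exponentially and no finite generating set exists, which is why the cutoff at $l$ leaves is essential rather than incidental.
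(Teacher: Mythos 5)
Your proof is correct, but it takes a genuinely different route from the paper's. You construct the ``subtree module'' $M(T)=\Q\{\text{subtrees of }T\}$ (truncated at $l$ leaves) directly, make it contravariantly functorial via preimages, and prove finite generation from scratch: every subtree $S\subseteq T$ is the fibre of a single vertex under the contraction collapsing $S$ and homeomorphically reducing the complement, and the leaf lemma bounds the number of components of $T\setminus S$ and hence the size of the target, so the generators may be taken to be all planar rooted trees of size bounded in terms of $l$. The paper never builds this module; it instead invokes two modules whose finite generation is already established in \cite{PR-trees} --- the module computing the first Kazhdan--Lusztig coefficient of the graphical matroid of $\cone(T)$, whose dimension equals $s_l(w)-(l(w)+1)$ via the identity $KL_1=\#(\text{codimension-one flats})-\#(\text{dimension-one flats})$ together with the bijection between codimension-one flats of $\cone(T)$ and subtrees of $T$, and an auxiliary edge module $N/N'$ whose Hilbert series supplies the correction term $\sum (l(w)+1)t^{l(w)/2}$ --- and then writes $H_l(t)$ as the sum of two algebraic series. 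Your approach is more self-contained and avoids the matroid input, at the cost of carrying the finite-generation argument yourself; that argument is the real content of your proof and you execute it correctly, with the $\leq l$ leaves hypothesis doing exactly the work you say it does (bounding the degree of the collapsed vertex and the size of the reduced complement). The paper's approach outsources finite generation entirely but needs the $KL_1$ identity and a second module to repair the count. Both arguments rely on the same truncation mechanism: the quotient by the submodule generated at trees with more than $l$ leaves is consistent precisely because contractions do not increase the number of leaves. The one point you flag as a possible obstacle --- that collapsing an arbitrary edge subset of a planar rooted tree yields a genuine planar contraction onto the quotient with its induced planar structure --- is indeed required, but this is the standard description of morphisms in $\mathcal{PT}$ from \cite{PR-trees} and \cite{Barter}, so it is routine rather than a gap.
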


\begin{proof}
We encode $H_l(t)$ as the Hilbert series of some $\mathcal{PT}^{op}$-module. In \cite{PR-trees}, Proudfoot and the author construct a finitely generated $\mathcal{PT}^{op}$-module $M$ such that
\[
\dim_\Q(M(T)) = \begin{cases} KL_1(\cone(T)) &\text{ if $T$ has $\leq l$ leaves}\\ 0 & \text{ otherwise,}\end{cases}
\]
where $KL_1(\cone(T))$ is the first Kahzdan-Lusztig coefficient of the graphical matroid associated to $\cone(T)$ (see \cite{EPW}). For our purposes, what is important is the fact that for any matroid $\mathcal{M}$,
\[
KL_1(\mathcal{M}) = \#(\text{codimension 1 flats of $\mathcal{M}$}) - \#(\text{dimension 1 flats of $\mathcal{M}$}).
\]
The graphical matroid of $\cone(T)$ has a 1 dimensional flat for every edge, and therefore
\[
\#(\text{dimension 1 flats of $\cone(T)$}) = |E_T| + |V_T| = 2|E_T| + 1.
\]
On the other hand, the number of codimension 1 flats of $\cone(T)$ are easily seen to be in bijection with subtrees of $T$. Thus,
\[
H_M(t) = \sum_w \dim_\Q(M(T_p(w)))t^{l(w)/2} = \sum_{T_p(w) \text{ has $\leq l$ leaves}} (s_l(w) -(l(w) + 1))t^{l(w)/2}.
\]

On the other hand, we can define a $\mathcal{PT}^{op}$-module $N$ such that,
\[
N(T) := \Q E_{\cone(T)},
\]
the vector space with basis indexed by the edges of $\cone(T)$. $N$ is finitely generated by a single edge and a single vertex. Moreover, it contains a submodule $N'$ which is generated by the pieces $N(T)$, where $T$ has strictly more than $l$ leaves. The quotient $N/N'$ has the property that
\[
N/N'(T) = \begin{cases} \Q E_{\cone(T)} &\text{ if $T$ has $\leq l$ leaves}\\ 0 &\text{ otherwise.}\end{cases}
\]
Therefore, the Hilbert series of $N/N'$ is precisely,
\[
\sum_{T_p(w) \text{ has $\leq l$ leaves}} (l(w) + 1)t^{l(w)/2}.
\]
Because $N$ is finitely generated, $N/N'$ is as well, and therefore this Hilbert series must be algebraic. This implies that our generating function is also algebraic, as desired.
\end{proof}

\end{document}